\def\l{\left}
\def\r{\right}
\def\bg{\bigg}
\def\({\bg(}
\def\){\bg)}
\def\t{\text}
\def\f{\frac}
\def\eq{\equiv}
\def\Z{\mathbb Z}
\def\<{\langle}
\def\>{\rangle}
\def\1{{\bf 1}}
\theoremstyle{plain}
\newtheorem{theorem}{Theorem}[section]
\newtheorem{conjecture}{Conjecture}
\newtheorem{lemma}{Lemma}[section]
\theoremstyle{definition}
\newtheorem*{Acks}{Acknowledgments}
\theoremstyle{remark}
\numberwithin{equation}{section}
\begin{document}
\title[Two supercongruences involving truncated hypergeometric series]{Two supercongruences involving truncated hypergeometric series}
\author[Wei Xia]{Wei Xia}
\address[Wei Xia]{Department of Mathematics, Nanjing
University, Nanjing 210093, People's Republic of China}
\email{wxia@smail.nju.edu.cn}
\author[Chen Wang]{Chen Wang*}
\address[Chen Wang]{Department of Applied Mathematics, Nanjing Forestry
University, Nanjing 210037, People's Republic of China}
\email{cwang@smail.nju.edu.cn}
\begin{abstract}
In this paper, we mainly establish two supercongruences involving truncated hypergeometric series by using some hypergeometric transformation formulas. The first supercongruence confirms a recent conjecture of the second author. The second supercongruence confirms a conjecture of Guo, Liu and Schlosser partially, and gives a parametric extension of a supercongruence of Long and Ramakrishna.
\end{abstract}
\keywords{hypergeometric series; hypergeometric transformation formula; supercongruences; $p$-adic Gamma functions}
\subjclass[2020]{Primary 33C20, 11A07; Secondary 11B65, 05A10}
\thanks{*Corresponding author}
\maketitle

\section{Introduction}

Let $n\in\mathbb{N}=\{0,1,2,\ldots\}$ and $(a)_n=a(a+1)\cdots(a+n-1)$ denote the Pochhammer symbol. For $r\in\mathbb{N}$ and $a_0,\ldots,a_r, b_1,\ldots,b_r,z\in\mathbb{C}$, the hypergeometric series ${}_{r+1}F_{r}$ are defined by
$$
{}_{r+1}F_r\bigg[\begin{matrix}a_0,&a_1,&\ldots,&a_r\\ &b_1,&\ldots,&b_r\end{matrix}\bigg|\ z\bigg]=\sum_{k=0}^{\infty}\f{(a_0)_k\cdots(a_r)_k}{(b_1)_k\cdots(b_r)_k}\cdot\f{z^k}{k!}.
$$
Partial sums of the hypergeometric series are usually called truncated hypergeometric series which are given by
$$
{}_{r+1}F_r\bigg[\begin{matrix}a_0,&a_1,&\ldots,&a_r\\ &b_1,&\ldots,&b_r\end{matrix}\bigg|\ z\bigg]_n=\sum_{k=0}^{n}\f{(a_0)_k\cdots(a_r)_k}{(b_1)_k\cdots(b_r)_k}\cdot\f{z^k}{k!}.
$$
When any of the $\alpha_i$ is a negative integer and none of the $\beta_i$ are negative integers larger than all $\alpha_j$, the above hypergeometric series terminates, and are also called the truncated hypergeometric series. In the past decades, supercongruences involving truncated hypergeometric series have been widely studied by different authors (cf. \cite{GuoLiu,GLS,GS1,GS,Liu2017,Liu2021,LR,MaoPan2022,PTW,Sun2011,VanHamme,Wang,WP,WS}).

Guo, Liu and Schlosser \cite[Theorem 1]{GLS} established the following supercongruence concerning truncated hypergeometric series ${}_6F_5$:
\begin{equation}\label{GLS6F5}
\sum_{k=0}^{p-1}(10k+r)\f{(\f r 5)_k^5}{k!^5}\eq0\pmod{p^4},
\end{equation}  
where $r\leq 1$ is an odd integer coprime with $5$ and $p\geq (5-r)/2$ is a prime such that $p\eq 2r\pmod 5$. Recently, under the same conditions of \eqref{GLS6F5}, the second author \cite{Wang} extended \eqref{GLS6F5} to the modulus $p^5$ case as follows:
\begin{equation}\label{GLSth1exeq}
\sum_{k=0}^{p-1}(10k+r)\f{(\f{r}{5})_k^5}{(1)_k^5}\eq\f{12p^4}{25}\cdot\f{\Gamma_p(\f r5)^4}{\Gamma_p(\f{2r}5)^2\Gamma_p(\f12+\f{3r}{10})\Gamma_p(\f12-\f r{10})^3}\sum_{k=0}^{(1-r)/2}\f{(\f{r-1}{2})_k(\f r5)_k^3}{(1)_k(\f{2r}5)_k^2(\f12+\f{3r}{10})_k}\pmod{p^5}.
\end{equation}
The first purpose of this paper is to establish a variant of \eqref{GLSth1exeq} which confirms a recent conjecture of the second author \cite[Conjecture 4.1]{Wang}.
\begin{theorem}\label{theorem1}
Let $r\leq 1$ be an odd integer coprime with $5$. Let $p$ be an odd prime such that $p\eq r\pmod{5}$ and $p\geq (5-3r)/2$. Then
\begin{equation}\label{conj1eq}
\sum_{k=0}^{p-1}(10k+r)\f{(\f{r}{5})_k^5}{(1)_k^5}\eq\f{p\Gamma_p(\f r5)^4}{\Gamma_p(\f{2r}5)^2\Gamma_p(\f12+\f{3r}{10})\Gamma_p(\f12-\f r{10})^3}\sum_{k=0}^{(1-r)/2}\f{(\f{r-1}{2})_k(\f r5)_k^3}{(1)_k(\f{2r}5)_k^2(\f12+\f{3r}{10})_k}\pmod{p^5}.
\end{equation}
\end{theorem}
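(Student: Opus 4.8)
The plan is to view the left-hand side as a truncated very-well-poised hypergeometric series and to collapse it, modulo $p^5$, onto the short balanced ${}_4F_3$ on the right. Put $a=\frac r5$. Since $10k+r=5(2k+a)$ and $\frac{a+2k}{a}=\frac{(1+\frac a2)_k}{(\frac a2)_k}$, the sum equals $r\sum_{k=0}^{p-1}\frac{(1+\frac a2)_k}{(\frac a2)_k}\,\frac{(a)_k^5}{(1)_k^5}$, a truncated very-well-poised ${}_6F_5$ whose four free parameters all equal $a$. Because $p\equiv r\pmod5$, the quantity $\frac r5+\frac{p-r}5=\frac p5$ is divisible by $p$; hence for $k>m:=\frac{p-r}5$ the symbol $(\frac r5)_k$ contains this factor and $(\frac r5)_k^5$ is divisible by $p^5$. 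Thus modulo $p^5$ the series truncates at $k=m$, and the first task is to evaluate this genuinely finite very-well-poised sum. The index $m=\frac{p-r}5$ is exactly what changes relative to \eqref{GLSth1exeq}: there $p\equiv2r\pmod5$ forces a different truncation point and, ultimately, the factor $\frac{12p^4}{25}$ in place of the present $p$.

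Next I would apply hypergeometric transformation formulas to this finite sum. Reversing the order of summation $k\mapsto m-k$ extracts a prefactor $\frac{(\frac r5)_m^5}{(m!)^5}$ and recasts the sum as $\sum_{j=0}^m\big(p-5(-m+2j)\big)\frac{(-m)_j^5}{(1-\frac p5)_j^5}$, in which the main very-well-poised parameter has become the negative integer $-m$ and the denominators $(1-\frac p5)_j$ differ from $(1)_j=j!$ only by the $p$-divisible amount $\frac p5$. I would then bring the principal (genuinely very-well-poised) part into the balanced ${}_4F_3\big[\frac{r-1}2,\frac r5,\frac r5,\frac r5;\ \frac{2r}5,\frac{2r}5,\frac12+\frac{3r}{10}\big]$ appearing on the right by means of Whipple's ${}_7F_6\to{}_4F_3$ transformation, together with whatever intermediate Thomae/Whipple ${}_4F_3$ relation is needed to align the parameters. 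One checks that this ${}_4F_3$ is Saalsch\"utzian and terminates at $\frac{1-r}2$, which matches the termination the transformation produces and explains the hypothesis $p\geq\frac{5-3r}2$: it guarantees that the parameters entering the transformation avoid $p$-adic collisions over the whole range of summation.

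It remains to evaluate everything modulo $p^5$. I would split the reversed linear factor $p-5(-m+2j)$ into a $p$-multiple of a shorter sum plus a principal very-well-poised piece. The principal piece is the structural analogue of the sum in \eqref{GLS6F5}, and I expect the two pieces to combine into an expression that is $O(p)$, matching the leading $p$ on the right. To pin it down, I would rewrite every remaining Pochhammer symbol through the $p$-adic Gamma function, using the functional equation $\Gamma_p(x+1)=-x\,\Gamma_p(x)$ (adjusted across multiples of $p$), the reflection formula, and the local analyticity of $\Gamma_p$, and then Taylor-expand each factor about the nearest $p$-adic integer. These factors should assemble into the quotient $\frac{\Gamma_p(\frac r5)^4}{\Gamma_p(\frac{2r}5)^2\Gamma_p(\frac12+\frac{3r}{10})\Gamma_p(\frac12-\frac r{10})^3}$, whose numerator and denominator arguments balance as required, multiplied by $p$ and by the balanced ${}_4F_3$.

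The main obstacle is this final step carried to full $p^5$ precision. One must expand several $\Gamma_p$-factors simultaneously and, at the same time, control the two genuine perturbations introduced along the way — the deformation $(1)_j\rightsquigarrow(1-\frac p5)_j$ of the denominators and the secondary sum coming from the $p$ in $p-5(-m+2j)$. Each perturbation is individually $O(p)$, but since the target congruence is modulo $p^5$ they feed into the coefficients of $p,\dots,p^4$ and must be expanded and then shown, after using the Saalsch\"utzian structure of the ${}_4F_3$, to reorganize into precisely the stated closed form with no residual terms. Keeping all of these expansions mutually consistent to order $p^5$, rather than the choice of transformations, is where the real difficulty lies.
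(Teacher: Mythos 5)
Your overall strategy---truncate at $m=(p-r)/5$, transform the finite very-well-poised sum into the balanced ${}_4F_3$ on the right, and finish with $p$-adic Gamma expansions---points in the right general direction, but there is a genuine gap exactly where you yourself flag ``the real difficulty'': you supply no mechanism for controlling the perturbations to order $p^5$. Your route (reverse summation $k\mapsto m-k$, deform $(1)_j\rightsquigarrow(1-\frac p5)_j$, split the linear factor, then brute-force Taylor expansion) generates correction terms at every order $p,p^2,p^3,p^4$, weighted by first- through fourth-power harmonic sums, and the Saalsch\"utzian structure of the target ${}_4F_3$ does not by itself reorganize them. The paper avoids this bookkeeping structurally rather than by expansion: the actual sum is realized as a ${}_7F_6$ whose upper parameters are deformed by the quadruple $\pm\frac p5,\ \pm\frac{ip}5$ with $i$ the fourth root of unity, so that by Lemma \ref{lemw} one has $(u+vp)_k(u-vp)_k(u+ivp)_k(u-ivp)_k\equiv(u)_k^4\bigl(1+v^4p^4\sum_{j=0}^{k-1}(u+j)^{-4}\bigr)\pmod{p^5}$: all corrections below order $p^4$ cancel identically, and the surviving $p^4$-term is then killed modulo $p^5$ by a reversal-symmetry argument (Lemma \ref{lemma32}). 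Nothing in your proposal plays the role of this root-of-unity deformation; with only real shifts of the parameters, the $p^2$- and $p^3$-order terms survive, and you would need several new harmonic-sum congruences that you have not identified, let alone proved.

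A second, related gap is that the transformation you invoke is left undetermined, and the obvious candidate is known to fail. The authors note explicitly that Liu's ${}_7F_6$ transformation \eqref{liueq}, which sufficed for the $p\equiv 2r\pmod 5$ congruence \eqref{GLSth1exeq}, does not prove the present theorem; they must first compose it with Whipple's ${}_4F_3$ transformation \eqref{4f3} to obtain the new formula \eqref{new}, and then specialize $m=\frac{1-r}{2}$, $t=\frac r5$, $n=\frac{p-r}{5}$, $a=\frac{r-5}{10}$, $b=-\frac p5$, $c=-\frac{ip}5$, so that on the ${}_4F_3$ side the deformed parameters occur in conjugate pairs ($\frac{r\pm p}5$ and $\frac{2r\pm ip}5$) and perturbations enter only at order $p^2$. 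Even then, eliminating those $p^2$-terms requires two further inputs absent from your sketch: the exact identity of Lemma \ref{lem44}, obtained by inserting a free parameter $x$ (with shifts $\pm x$, $\pm ix$) into Whipple's transformation and comparing coefficients of $x^2$, and the mod-$p^2$ harmonic congruence of Lemma \ref{lemma33} together with the product expansion $\Gamma_p(a+mp)\Gamma_p(a-mp)\equiv\Gamma_p(a)^2\bigl(1+m^2p^2\sum_{1\leq j\leq\langle-a\rangle_{p^2},\,p\nmid j}j^{-2}\bigr)\pmod{p^4}$ of Lemma \ref{g}. Your step ``Taylor-expand each factor \ldots these factors should assemble'' is precisely where these identities are indispensable; without them the $p^2$-coefficients do not cancel, and the proof does not close.
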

The second author proved \eqref{GLSth1exeq} by using a ${}_7F_6$ transformation formula due to Liu \cite[Lemma 2.6]{Liu2021}. However, we cannot prove \eqref{conj1eq} in the same way. Our proof relies on a new ${}_7F_6$ transformation formula deduced from both Liu's transformation formula and Whipple's ${}_4F_3$ transformation formula \cite[Theorem 3.3.3]{Andrews}. Based on many computations, we point out that the congruence \eqref{conj1eq} does not hold modulo $p^6$ in general.

In 1997, Van Hamme \cite{VanHamme} conjectured 13 supercongruences concerning truncated hypergeometric series motivated by Ramanujan-type series. For example, he asserts that for any prime $p\geq5$,
\begin{equation}\label{VanHammeconj}
\sum_{k=0}^{p-1}(6k+1)\f{(\f13)_k^6}{(1)_k^6}\eq\begin{cases}-p\Gamma_p\l(\f13\r)^9\pmod{p^4}\quad&\t{if}\ p\eq1\pmod{6},\\ 0\pmod{p^4}\quad&\t{if}\ p\eq5\pmod{6},\end{cases}
\end{equation}
where $\Gamma_p(x)$ is the $p$-adic Gamma function introduced by Morita \cite{Morita}. In 2016, the supercongruence \eqref{VanHammeconj} was confirmed by Long and Ramakrishna \cite[Theorem 2]{LR} in the following strengthening form:
\begin{equation}\label{LR}
\sum_{k=0}^{p-1}(6k+1)\f{(\f13)_k^6}{(1)_k^6}\eq\begin{cases}-p\Gamma_p\l(\f13\r)^9\pmod{p^6}\quad&\t{if}\ p\eq1\pmod{6},\\ -\f{10p^4}{27}\Gamma_p\l(\f13\r)^9\pmod{p^6}\quad&\t{if}\ p\eq5\pmod{6}.\end{cases}
\end{equation}
Later, Liu\cite[Theorem1.2]{Liu2021} derived a similar result as follows:
\begin{equation}\label{Liucon}
\sum_{k=0}^{p-1}(6k-1)\f{(-\f13)_k^6}{(1)_k^6}\eq\begin{cases}140p^4\Gamma_p\l(\f23\r)^9\pmod{p^5}\quad&\t{if}\ p\eq1\pmod{6},\\ 378p\Gamma_p\l(\f23\r)^9\pmod{p^5}\quad&\t{if}\ p\eq5\pmod{6},\end{cases}
\end{equation}
where $p\geq5$ is a prime. Recently, Guo, Liu and Schlosser \cite{GLS} gave a parametric extension of \eqref{LR} and \eqref{Liucon}. In fact, they presented the following surprisingly beautiful conjecture.
\begin{conjecture}\label{conjecture1}Let $r\leq 1$ be an integer coprime with $3$. \rm{(i)} If $p\geq5$ is a prime such that $p\eq-r\pmod{3}$ and $p\geq3-r$, then
\begin{align}\label{GLSconj1eq}
\sum_{k=0}^{p-1}(6k+r)\f{(\f{r}3)_k^6}{(1)_k^6}\eq&\ \f{(-1)^{r}10p^4}{27}\cdot\f{\Gamma_p(\frac{r}{3})^6}{\Gamma_p(\f{2r}3)^3}\sum_{k=0}^{1-r}\f{(r-1)_k(\f{r}3)_k^3}{(1)_k(\f{2r}{3})_k^3}\pmod{p^6}.
\end{align}
\rm{(ii)} If $p\geq7$ is a prime such that $p\eq r\pmod{3}$ and $p\geq3-2r$, then
\begin{align}\label{GLSconj2eq}
\sum_{k=0}^{p-1}(6k+r)\f{(\f{r}3)_k^6}{(1)_k^6}\eq&\ (-1)^{r+1}p\cdot\f{\Gamma_p(\frac{r}{3})^6}{\Gamma_p(\f{2r}3)^3}\sum_{k=0}^{1-r}\f{(r-1)_k(\f{r}3)_k^3}{(1)_k(\f{2r}{3})_k^3}\pmod{p^6}.
\end{align}
\end{conjecture}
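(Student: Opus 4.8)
The plan is to recognize the left-hand side as a truncated very-well-poised ${}_7F_6$ and push it, via the transformation machinery of Theorem~\ref{theorem1}, onto the balanced ${}_4F_3$ short sum on the right. Writing $a=\f r3$ and using $\f{(1+a/2)_k}{(a/2)_k}=\f{a+2k}{a}=\f{6k+r}{r}$, one has
$$
\sum_{k=0}^{\infty}(6k+r)\f{(\f r3)_k^6}{(1)_k^6}=r\cdot{}_7F_6\bigg[\begin{matrix}a,&1+\f a2,&a,&a,&a,&a,&a\\ &\f a2,&1,&1,&1,&1,&1\end{matrix}\bigg|\ 1\bigg],
$$
a very-well-poised ${}_7F_6$ with all five free parameters equal to $a$, whose $k$-th term decays like $k^{2r-5}$ so that the series converges for $r\ls1$. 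I would then apply a ${}_7F_6$ transformation of the kind used for Theorem~\ref{theorem1} — built from Liu's ${}_7F_6$ transformation \cite[Lemma 2.6]{Liu2021} together with Whipple's ${}_4F_3$ transformation \cite[Theorem 3.3.3]{Andrews} — to convert this series into a quotient of classical Gamma functions times the terminating balanced sum $\sum_{k=0}^{1-r}\f{(r-1)_k(\f r3)_k^3}{(1)_k(\f{2r}3)_k^3}$, which terminates because $r-1$ is a non-positive integer. The boundary case $r=1$ degenerates to the known Long--Ramakrishna congruence \eqref{LR} and serves as a first check.

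Next I would reconcile the infinite series with the truncation at $p-1$. Because $p\eq\pm r\pmod3$, there is a unique index $j_0\in\{0,\dots,p-1\}$ with $\f r3+j_0\eq0\pmod p$, and a standard valuation count shows that for every $k>j_0$ the Pochhammer $(\f r3)_k$ carries one more factor of $p$ than $k!$ does; raised to the sixth power this forces each such term to be $\eq0\pmod{p^6}$, for $k$ beyond $p-1$ as well. Hence $\sum_{k=0}^{p-1}$, the partial sum $\sum_{k=0}^{j_0}$, and the full convergent series all agree modulo $p^6$, so the transformation identity may be read modulo $p^6$. The two parts of the conjecture are distinguished by the location of $j_0$ — near $\f{p-1}3$ when $p\eq r\pmod3$ and near $\f{2p-1}3$ when $p\eq-r\pmod3$ — and this controls how many factors in the Gamma prefactor become $p$-divisible, hence separates the leading power $p$ in \eqref{GLSconj2eq} from $p^4$ in \eqref{GLSconj1eq}.

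It then remains to rewrite the classical Gamma prefactor as $\f{\Gamma_p(\f r3)^6}{\Gamma_p(\f{2r}3)^3}$ times the correct power of $p$ and the correct constant. I would pass to $p$-adic Gamma functions through Morita's relations: the reflection formula $\Gamma_p(x)\Gamma_p(1-x)=(-1)^{a_0(x)}$, the functional equation $\Gamma_p(x+1)=-x\,\Gamma_p(x)$ for $x\in\Z_p^{\times}$ together with its modified form on $p\Z_p$ (the source of the surplus power of $p$), and local analyticity to Taylor-expand each factor to the required $p$-adic order. The residue of $p$ modulo $3$ decides whether each factor is a unit or carries a factor of $p$, and must simultaneously produce the powers $p^4$ and $p$, the signs $(-1)^r$ and $(-1)^{r+1}$, and the rational constant $\f{10}{27}$. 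Specializing $r=1$ should return $-p\,\Gamma_p(\f13)^9$ and $-\f{10p^4}{27}\Gamma_p(\f13)^9$, i.e. \eqref{LR}, which I would use as a running consistency check.

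The principal obstacle is twofold. First, for $r\ne1$ the very-well-poised ${}_7F_6$ is not $2$-balanced, so the naive Whipple reduction produces two balanced ${}_4F_3$'s rather than one; producing the single short sum of \eqref{GLSconj1eq}--\eqref{GLSconj2eq} requires exactly the tailored ${}_7F_6$ transformation alluded to above, and verifying that the spurious companion sum contributes nothing modulo $p^6$ is delicate. Second, and more seriously, attaining the full modulus $p^6$ (rather than $p^4$ or $p^5$) forces the Gamma prefactor and the $r$-dependence through $(r-1)_k$ to be expanded to high order, with the constant $\f{10}{27}$ emerging only from sub-leading terms. I expect part (ii) to be the harder half, since its leading order is merely $p^1$ and five further $p$-adic orders must be tracked; it is quite possible that the method, like the companion Theorem~\ref{theorem1}, yields the clean closed form only modulo $p^5$ in general, which would account for the qualifier ``partially'' in the confirmation of Conjecture~\ref{conjecture1}.
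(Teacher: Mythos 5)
Your proposal does not prove the statement, and its central analytic step is invalid as written. You sum the very-well-poised ${}_7F_6$ as a convergent series of \emph{real} numbers and propose to evaluate it by a quotient of classical Gamma functions; that is an archimedean statement. A congruence modulo $p^6$ makes sense only for rational (or $p$-adically convergent) quantities, and the real sum of the rational terms and their $p$-adic sum live in different completions of $\Q$: showing that the tail terms beyond $j_0$ are divisible by $p^6$ does not let you ``read the transformation identity modulo $p^6$,'' because the classical Gamma-quotient on the other side is not a rational number to which reduction mod $p^6$ applies. The paper avoids this entirely by keeping everything terminating and rational: in the transformation \eqref{new} (derived from Liu's formula \eqref{liueq} and Whipple's \eqref{4f3}) it sets $m=1-r$, $t=\f r3$, $n=\f{p-r}3$, $a=0$, $b=-\f p3$, $c=\f{ip}3$, so both sides are finite rational sums and the congruence argument is legitimate; this choice also eliminates your ``spurious companion ${}_4F_3$,'' which only arises in the nonterminating Whipple reduction and for which you offer no cancellation mechanism.

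Second, you have no mechanism for the stated precision, and none is known: the statement you were given is a conjecture, and the paper itself proves part (ii) only modulo $p^5$ (Theorem \ref{theorem2}), explicitly leaving the mod-$p^6$ case open. The mod-$p^5$ precision in the paper comes from a specific algebraic symmetry, Lemma \ref{lemw}, namely $(u+vp)_k(u-vp)_k(u+ivp)_k(u-ivp)_k\eq(u)_k^4\l(1+v^4p^4\sum_{j=0}^{k-1}(u+j)^{-4}\r)\pmod{p^5}$, where the fourth root of unity $i$ is chosen precisely so that the $p^2$- and $p^3$-order cross terms cancel; the residual $p^4$-term is killed by the harmonic-sum symmetry of Lemma \ref{lemma42}, and the $p^2$-order corrections on the ${}_4F_3$ side are cancelled against $p$-adic Gamma expansions via Lemmas \ref{g}, \ref{lemma43} and \ref{lemma44}. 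Your sketch contains none of these cancellation identities, which are the actual content of the proof; your own hedge that the method may only reach $p^5$ is accurate, but it concedes the statement as posed. Finally, your plan to ``rewrite the classical Gamma prefactor'' in terms of $\Gamma_p$ is not a formal substitution: in the paper the $\Gamma_p$-values enter only as exact rewritings of rational Pochhammer quotients such as $(\f{2r}3)_n^3/(1)_n^3$ in \eqref{e3}, never as $p$-adic avatars of archimedean Gamma values.
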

The congruence \eqref{GLSconj1eq}, restricted to modulus $p^5$, was confirmed by Guo, Liu and Schlosser \cite{GLS}, and completely solved by the second author \cite{Wang}. The second purpose of this article is to confirm the modulus $p^5$ case of the congruence \eqref{GLSconj2eq}.
\begin{theorem}\label{theorem2}
The supercongruence \eqref{GLSconj2eq}, restricted to modulus $p^5$ case, is true.
\end{theorem}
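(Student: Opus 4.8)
The plan is to run the hypergeometric‑transformation method behind \eqref{GLSth1exeq} and behind the modulus $p^5$ case of \eqref{GLSconj1eq}, now adapted to the residue class $p\eq r\pmod 3$. Put $a=\frac r3$ and note that $(6k+r)=r\,(1+\frac a2)_k/(\frac a2)_k$, so the left‑hand side of \eqref{GLSconj2eq} is $r$ times the $(p-1)$‑truncation of the very‑well‑poised
$$
{}_7F_6\bigg[\begin{matrix}\frac r3,& 1+\frac r6,& \frac r3,& \frac r3,& \frac r3,& \frac r3,& \frac r3\\ & \frac r6,& 1,& 1,& 1,& 1,& 1\end{matrix}\bigg|\ 1\bigg].
$$
Since $p\eq r\pmod3$, the factor $\frac r3+j$ equals $\frac p3$ at $j=\frac{p-r}3$, so $(\frac r3)_k$ becomes divisible by $p$ once $k>\frac{p-r}3$; there $(\frac r3)_k^6\eq0\pmod{p^6}$ while $(1)_k=k!$ stays a unit. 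Hence, modulo $p^6$, the sum reduces to $\sum_{k=0}^{m}$ with $m=\frac{p-r}3$.

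The arithmetic heart is the coincidence $-m=-\frac{p-r}3\eq\frac r3\pmod p$, together with $1+\frac r3+m=1+\frac p3\eq1\pmod p$. This lets me replace the truncated series by the genuinely terminating very‑well‑poised ${}_7F_6$ obtained by switching one free entry $\frac r3$ to $-m$; the resulting discrepancy is governed by $\prod_{j=0}^{k-1}\!\big(1-\frac{p}{r+3j}\big)$ and $\prod_{j=1}^{k}\!\big(1+\frac{p}{3j}\big)^{-1}$, each factor $\eq1\pmod p$ on $0\ls k\ls m$, and can therefore be expanded to fourth order in $p$. To the terminating series I then apply Liu's ${}_7F_6$ transformation \cite[Lemma~2.6]{Liu2021} — the same device used for \eqref{GLSth1exeq} — which converts it into an \emph{exact} identity of the shape (ratio of Pochhammer symbols)$\times$(terminating ${}_4F_3$), and I use Whipple's ${}_4F_3$ transformation \cite[Theorem~3.3.3]{Andrews} to bring this ${}_4F_3$ into the well‑poised shape matching the right‑hand side of \eqref{GLSconj2eq}.

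It remains to carry out the $p$‑adic analysis. I rewrite every Pochhammer symbol through the $p$‑adic Gamma function and apply the reflection formula together with the Taylor expansion of $\Gamma_p$ about the points $\frac r3$ and $\frac{2r}3$; the factor $\frac p3$ sitting at the top of the range supplies a single power of $p$, which is precisely the leading behaviour that separates case (ii) from the $p^4$ of case (i). Simultaneously I must reduce the long well‑poised ${}_4F_3$ modulo $p^5$ to its initial segment: its terms beyond $k=1-r$, where a numerator Pochhammer reduces to $(r-1)_k$, should be annihilated once the prefactor's factor of $p$ is taken into account, leaving exactly $\sum_{k=0}^{1-r}(r-1)_k(\frac r3)_k^3\big/\big((1)_k(\frac{2r}3)_k^3\big)$. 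Matching the surviving prefactor to $(-1)^{r+1}p\,\Gamma_p(\frac r3)^6/\Gamma_p(\frac{2r}3)^3$ then gives \eqref{GLSconj2eq} modulo $p^5$.

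The main obstacle is holding $p^5$ precision through the two approximation steps, since the answer itself is only of size $p$. First, the replacement of $\frac r3$ by $-m$ must be expanded to order $p^4$, and the resulting harmonic‑type sums $\sum 1/(r+3j)$ and $\sum 1/(3j)$ (and their higher powers) have to be recombined with the main term to the correct order. Second, the $p$‑adic Gamma expansion of the prefactor produces logarithmic‑derivative sums that must cancel against those coming from the collapse of the long ${}_4F_3$ to its short initial segment; it is the breakdown of these cancellations at the next order that should explain why the congruence is asserted only modulo $p^5$ and not $p^6$. I would organise the computation by first isolating the leading $p$‑term and then verifying the three successive correction orders, in close parallel with the treatment of \eqref{GLSconj1eq} in \cite{Wang}.
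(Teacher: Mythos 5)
Your skeleton (truncate at $k=\frac{p-r}{3}$, pass to a terminating very-well-poised ${}_7F_6$, transform via Liu's formula \eqref{liueq} composed with Whipple's \eqref{4f3}, then do $p$-adic Gamma analysis) points at the right machinery: the paper's Lemma 2.2, i.e.\ \eqref{new}, is exactly the composite of Liu and Whipple you describe, and the endgame you sketch (the prefactor $(1+\frac r3)_n$ supplying the single power of $p$, reduction of the ${}_4F_3$ to $\sum_{k=0}^{1-r}$) matches the paper. But there is a genuine gap at your second step, and it is the crux of the whole problem. You propose to switch a \emph{single} upper entry $\frac r3$ to $-\frac{p-r}{3}$ and control the discrepancy by expanding $\prod_j\bigl(1-\frac{p}{r+3j}\bigr)$ and $\prod_j\bigl(1+\frac{p}{3j}\bigr)^{-1}$ to fourth order in $p$. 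Since the sum itself is only $O(p)$ and the target precision is $p^5$, this forces you to show that the weighted harmonic sums $\sum_{k\leq(p-r)/3}(6k+r)\frac{(r/3)_k^6}{(1)_k^6}\bigl(\sum_{j<k}\frac{1}{(r/3+j)^s}-\sum_{j<k}\frac{1}{(1+j)^s}\bigr)$ vanish modulo $p^4$ for $s=1$, modulo $p^3$ for $s=2$, modulo $p^2$ for $s=3$, together with the mixed products. None of these is available, and the paper explicitly identifies this as the obstruction (``the hidden symmetries of some harmonic sums are missing''). The paper's proof never replaces a parameter approximately: it instantiates the \emph{exact} identity \eqref{new} with $m=1-r$, $t=\frac r3$, $n=\frac{p-r}{3}$, $a=0$, $b=-\frac p3$, $c=\frac{ip}{3}$, so that the Pochhammer symbols occur in the symmetric quadruples $(u\pm\frac p3)_k(u\pm\frac{ip}{3})_k$; Lemma \ref{lemw} then gives $(u)_k^4\bigl(1+\frac{p^4}{81}\sum_{j<k}(u+j)^{-4}\bigr)$ modulo $p^5$, because the fourth-root-of-unity shifts $\pm1,\pm i$ annihilate the $p$-, $p^2$- and $p^3$-order terms identically. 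Only the $s=4$ sum survives, and it is needed merely modulo $p$, which is Lemma \ref{lemma42} (proved by the reversal $k\mapsto\frac{p-r}{3}-k$ together with Lemma \ref{lemma41}). Your one-parameter switch enjoys no such cancellation, so the expansion you envisage cannot be closed at $p^5$ precision; note also that altering one upper entry of a very-well-poised series breaks the pairing Liu's \eqref{liueq} requires (the entry $-n$ must be matched by the lower parameter $1+t+n$), so the two discrepancy products you wrote down do not even account for all the shifted factors.

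The same optimism recurs on the ${}_4F_3$ side: you assert the logarithmic-derivative sums from the $\Gamma_p$-expansion of the prefactor ``should cancel'' against those from the collapse of the ${}_4F_3$, but this cancellation is not automatic and is precisely what the paper's remaining lemmas supply. Concretely, one needs the exact identity of Lemma \ref{lemma44} (again proved by reversing the finite sum), which converts the $k$-dependent combination $\sum_{j<k}\frac{1}{(r/3+j)^2}+\sum_{j<k}\frac{1}{(2r/3+j)^2}$ into the $k$-independent constant $\sum_{j=0}^{-r}\frac{1}{(r/3+j)^2}$, and the congruence of Lemma \ref{lemma43}, which matches that constant against the truncated sums $\sum_{1\leq j\leq\langle-r/3\rangle_{p^2},\,p\nmid j}j^{-2}$ arising from the $\Gamma_p(a+mp)\Gamma_p(a-mp)$ expansion of Lemma \ref{g} modulo $p^2$. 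In short: you have the right transformation tools and the right destination, but the error analysis at the heart of the proof --- the four-fold symmetric perturbation by $\pm p,\pm ip$ replacing your single-parameter switch, plus the two harmonic-sum lemmas effecting the final cancellation --- is missing, and without it the argument does not reach modulus $p^5$.
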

The second author proved \eqref{GLSconj1eq} by using the fifth primitive root of unity $\zeta$. However, when we attempted to prove \eqref{GLSconj2eq} modulo $p^5$ in the same way as in the proof of \eqref{GLSconj1eq}, the hidden symmetries of some harmonic sums are missing. To avoid this obstruction, we use the fourth primitive root of unity $i$ instead of $\zeta$. It appears to be rather difficult to prove \eqref{GLSconj2eq} completely in this way. We hope that an interested reader will make some progress on it.

In our proofs, the key ingredients are some hidden symmetric relations and  hypergeometric transformation formulas. The rest of this paper is organized as follows. Section 2 is devoted to recalling some necessary transformation formulas of hypergeometric series and some properties of the $p$-adic Gamma functions. We will prove Theorems \ref{theorem1} and \ref{theorem2} in Sections 3 and 4, respectively.

\section{Preliminary results}

\begin{lemma}[Whipple {\cite[Theorem 3.3.3]{Andrews}}]\label{lem4f3}
Let $n$ be nonnegative integers. Then
\begin{equation}\label{4f3}
{}_{4}F_3\bigg[\begin{matrix}-n,&a,&b,&c\\ &d,&e,&f\end{matrix}\bigg|\ 1\bigg]
=\frac{(e-a)_n(f-a)_n}{(e)_n(f)_n}\cdot{}_{4}F_3\bigg[\begin{matrix}-n,&a,&d-b,&d-c\\ &d,&a+1-n-e,&a+1-n-f\end{matrix}\bigg|\ 1\bigg],
\end{equation}
where $a+b+c-n+1=d+e+f$.
\end{lemma}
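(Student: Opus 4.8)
The plan is to exploit the balance (Saalsch\"utzian) condition $a+b+c-n+1=d+e+f$, which places both ${}_4F_3$ series in the regime governed by the Pfaff--Saalsch\"utz summation theorem, and to derive \eqref{4f3} by a double-sum manipulation invoking Pfaff--Saalsch\"utz twice. Throughout I would regard both sides as rational functions of the parameters $a,b,c,d,e,f$ subject to the single balance relation: each side is a terminating sum because of the factor $(-n)_k$, hence a rational function of the free parameters, so it suffices to prove the identity for generic values and extend it by continuity to the degenerate cases where a lower parameter is a non-positive integer. A useful preliminary check is that the target series on the right satisfies the \emph{same} balance relation $a+b+c-n+1=d+e+f$ (a one-line verification of its upper and lower parameter sums), confirming that the transformation maps Saalsch\"utzian series to Saalsch\"utzian series.

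First I would record Pfaff--Saalsch\"utz in the form obtained by setting $\alpha=d-b$, $\beta=d-c$, $\gamma=d$ in the terminating balanced ${}_3F_2$, namely
\begin{equation*}
\frac{(b)_k(c)_k}{(d)_k}=(b+c-d)_k\sum_{j=0}^{k}\frac{(-k)_j(d-b)_j(d-c)_j}{(d)_j\,(1+d-b-c-k)_j\,j!}.
\end{equation*}
Substituting this for the factor $(b)_k(c)_k/(d)_k$ in the left-hand side of \eqref{4f3} converts it into a finite double sum over $k$ and $j$, in which the shifted parameters $d-b$ and $d-c$ appearing on the right of \eqref{4f3} are already visible.

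Next I would interchange the order of summation (legitimate for a finite sum), letting $j$ run first and, for fixed $j$, letting $k$ run from $j$ to $n$. Writing $k=j+m$ and using $(-k)_j/k!=(-1)^j/(k-j)!$ together with the reflection identity $(x)_j=(-1)^j(1-x-j)_j$ applied to $(1+d-b-c-k)_j$, the inner sum over $m$ should reorganize into a terminating balanced ${}_3F_2$ in the new summation variable; the balance condition is exactly what guarantees that this inner series is again Saalsch\"utzian. A second application of Pfaff--Saalsch\"utz then evaluates it in closed form, and after collecting prefactors the remaining single sum over $j$ should be precisely
\begin{equation*}
\frac{(e-a)_n(f-a)_n}{(e)_n(f)_n}\cdot{}_{4}F_3\bigg[\begin{matrix}-n,&a,&d-b,&d-c\\ &d,&a+1-n-e,&a+1-n-f\end{matrix}\bigg|\ 1\bigg],
\end{equation*}
which is the right-hand side of \eqref{4f3}.

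The main obstacle I anticipate is the parameter bookkeeping in the second step. After the interchange the inner sum carries the $k$-dependent parameter $(1+d-b-c-k)_j$, and one must convert the shifted arguments into standard Pochhammer symbols via the reflection identity so that Pfaff--Saalsch\"utz applies cleanly and produces exactly the shifted lower parameters $a+1-n-e$ and $a+1-n-f$ on the right. Tracking the sign factors $(-1)^j$, carrying the prefactor $(b+c-d)_k$ through the second summation, and re-substituting the balance relation at the right moment to collapse the spurious parameters is where sign and index errors are easiest to make. Should the direct manipulation prove unwieldy, an alternative route is to deduce \eqref{4f3} from Sears' ${}_4F_3$ transformation, or equivalently from the symmetry group of the terminating Saalsch\"utzian ${}_4F_3$; but the two-fold Pfaff--Saalsch\"utz argument is the most self-contained.
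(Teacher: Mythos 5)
Your proposal is correct: the double application of Pfaff--Saalsch\"utz (substitute the expansion of $(b)_k(c)_k/(d)_k$, interchange the finite sums, reindex $k=j+m$ using $(-k)_j/k!=(-1)^j/(k-j)!$ and the reflection $(x)_j=(-1)^j(1-x-j)_j$, note that the inner series ${}_3F_2\big[{-(n-j),\,a+j,\,b+c-d\atop e+j,\,f+j}\,\big|\,1\big]$ is again Saalsch\"utzian precisely because $a+b+c-n+1=d+e+f$, sum it, and collect Pochhammer prefactors via $(e-a)_{n-j}=(e-a)_n/((-1)^j(a+1-n-e)_j)$ and its analogues) is exactly the classical derivation of Whipple's balanced ${}_4F_3$ transformation, and the bookkeeping you flag does close up to give the stated prefactor $(e-a)_n(f-a)_n/((e)_n(f)_n)$. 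The paper offers no proof of this lemma — it quotes it from Andrews--Askey--Roy, Theorem 3.3.3, whose proof is essentially the argument you outline — so your route coincides with the paper's (cited) one.
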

Combining the above transformation formula and Whipple's ${}_7F_6$ transformation formula \cite[Theorem 3.4.5]{Andrews}, Liu \cite[Lemma 2.6]{Liu2021} obtained the following result: For nonnegative integers $n$ and $m$, we have
\begin{align}\label{liueq}
&{}_7F_6\bigg[\begin{matrix}t,&1+\f12t,&-n,&t-a,&t-b,&t-c,&1-t-m+n+a+b+c\\ &\f12t,&1+t+n,&1+a,&1+b,&1+c,&2t+m-n-a-b-c\end{matrix}\bigg|\ 1\bigg]\notag\\
=\ &{}_4F_3\bigg[\begin{matrix}-m,&-n,&a+b+c+1-m-2t,&a+b+c+1+n-m-t\\ &a+b+1-m-t,&a+c+1-m-t,&b+c+1-m-t\end{matrix}\bigg|\ 1\bigg]\notag\\
&\times\f{(1+t)_n(a+b+1-m-t)_n(a+c+1-m-t)_n(b+c+1-m-t)_n}{(1+a)_n(1+b)_n(1+c)_n(a+b+c+1-m-2t)_n}.
\end{align}
With the help of  \eqref{4f3} and \eqref{liueq}, we derive the following transformation formula.
\begin{lemma}
For nonnegative integers $n$ and $m$, we have
\begin{align}\label{new}
&{}_7F_6\bigg[\begin{matrix}t,&1+\f12t,&-n,&t-a,&t-b,&t-c,&1-t-m+n+a+b+c\\ &\f12t,&1+t+n,&1+a,&1+b,&1+c,&2t+m-n-a-b-c\end{matrix}\bigg|\ 1\bigg]\notag\\
=\ &{}_4F_3\bigg[\begin{matrix}-m,&-n,&t-b,&-n-b\\ &a+c+1-m-t,&t-n-a-b,&t-n-b-c\end{matrix}\bigg|\ 1\bigg]\notag\\
&\times\f{(1+t)_n(a+b+1-t)_n(a+c+1-m-t)_n(b+c+1-t)_n}{(1+a)_n(1+b)_n(1+c)_n(a+b+c+1-m-2t)_n}.
\end{align}
\end{lemma}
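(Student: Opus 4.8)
The plan is to begin from Liu's transformation \eqref{liueq}, which already rewrites the left-hand side of \eqref{new} as a terminating $_4F_3$ times an explicit product of Pochhammer symbols, and then to transform that $_4F_3$ by a single application of Whipple's formula \eqref{4f3}. To keep the notation of Lemma \ref{lem4f3} separate from that of \eqref{liueq}, I would write \eqref{4f3} with parameters $-N,A,B,C$ over $D,E,F$, and specialize
\[
N=m,\quad A=-n,\quad B=a+b+c+1-m-2t,\quad C=a+b+c+1+n-m-t,
\]
\[
D=a+c+1-m-t,\quad E=a+b+1-m-t,\quad F=b+c+1-m-t.
\]
These are exactly the numerator and denominator entries of the $_4F_3$ produced by \eqref{liueq}, with $-m$ chosen as the terminating parameter.

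First I would verify the balancing hypothesis $A+B+C-N+1=D+E+F$ of Lemma \ref{lem4f3}. A direct computation shows that both sides equal $2(a+b+c)+3-3m-3t$, so \eqref{4f3} applies. Reading off its right-hand side, the surviving numerator parameter is $A=-n$, while $D-B=t-b$ and $D-C=-n-b$; for the denominator, $D=a+c+1-m-t$ is unchanged, and $A+1-N-E=t-n-a-b$, $A+1-N-F=t-n-b-c$. This reproduces precisely the $_4F_3$ displayed in \eqref{new}.

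It remains to track the prefactors. Whipple's formula contributes the factor $(E-A)_N(F-A)_N/\big((E)_N(F)_N\big)$, which under the above substitution equals
\[
\frac{(a+b+n+1-m-t)_m\,(b+c+n+1-m-t)_m}{(a+b+1-m-t)_m\,(b+c+1-m-t)_m}.
\]
Multiplying this by the Pochhammer product already present in \eqref{liueq}, the factors $(1+t)_n$, $(a+c+1-m-t)_n$, and the entire denominator are untouched, so only $(a+b+1-m-t)_n$ and $(b+c+1-m-t)_n$ are affected. Here the decisive simplification is the commutation identity $(x)_{n+m}=(x)_n(x+n)_m=(x)_m(x+m)_n$, which rearranges to $(x+n)_m/(x)_m=(x+m)_n/(x)_n$. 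Taking $x=a+b+1-m-t$ and $x=b+c+1-m-t$ turns Whipple's prefactor into $(a+b+1-t)_n(b+c+1-t)_n/\big((a+b+1-m-t)_n(b+c+1-m-t)_n\big)$, whence the product collapses so that $(a+b+1-m-t)_n(b+c+1-m-t)_n$ is replaced by $(a+b+1-t)_n(b+c+1-t)_n$. This is exactly the Pochhammer product in \eqref{new}, finishing the proof.

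I expect the genuine difficulty to be organizational rather than conceptual. The balancing condition admits several superficially valid pairings of $B$ with $C$ and of $E$ with $F$, but only the assignment above steers Whipple's output onto the target series; the main risk is a misordering that yields a correct-but-different $_4F_3$. The subsequent Pochhammer bookkeeping is routine once the commutation identity is invoked, provided one keeps careful track of the shifts $+m$ versus $-m-t$ and of signs.
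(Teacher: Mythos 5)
Your proof is correct and takes essentially the same route as the paper: starting from Liu's transformation \eqref{liueq} and hitting the resulting balanced $_4F_3$ with Whipple's formula \eqref{4f3}. The only (cosmetic) difference is the role assignment inside \eqref{4f3} — the paper takes $-n$ as the terminating parameter with $-m$ in the $a$-slot, so its prefactor $(e-a)_n(f-a)_n/\bigl((e)_n(f)_n\bigr)$ comes out directly as $(a+b+1-t)_n(b+c+1-t)_n/\bigl((a+b+1-m-t)_n(b+c+1-m-t)_n\bigr)$ and cancels against Liu's Pochhammer product with no further work, whereas your swapped choice ($-m$ terminating, $A=-n$) produces the same transformed $_4F_3$ but a prefactor indexed by $m$, which your commutation identity $(x+n)_m/(x)_m=(x+m)_n/(x)_n$ correctly converts to the identical expression.
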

\begin{proof}Setting $a\rightarrow-m$, $n\rightarrow n$, $b\rightarrow a+b+c+1-m-2t$, $c\rightarrow a+b+c+1+n-m-t$, $d\rightarrow a+c+1-m-t$, $e\rightarrow a+b+1-m-t$ and $f\rightarrow b+c+1-m-t$ in \eqref{4f3}, we have
\begin{align*}
&{}_4F_3\bigg[\begin{matrix}-m,&-n,&a+b+c+1-m-2t,&a+b+c+1+n-m-t\\ &a+b+1-m-t,&a+c+1-m-t,&b+c+1-m-t\end{matrix}\bigg|\ 1\bigg]\\
=\ &{}_4F_3\bigg[\begin{matrix}-m,&-n,&t-b,&-n-b\\ &a+c+1-m-t,&t-n-a-b,&t-n-b-c\end{matrix}\bigg|\ 1\bigg]\\
&\times\frac{(a+b+1-t)_n(b+c+1-t)_n}{(a+b+1-m-t)_n(b+c+1-m-t)_n}.
\end{align*}
Substituting the above equation into the right-hand side of \eqref{liueq}, we immediately get the desired result.
\end{proof}
Let $p$ be an odd prime and $\mathbb{Z}_p$ be the ring of $p$-adic integers. Let $\nu_p(\cdot)$ denote the $p$-adic order and $|x|_p=p^{-\nu_p(x)}$ the $p$-adic norm. We now recall the definition of Morita's $p$-adic Gamma function $\Gamma_p$. For each integer $n\geq1$, define
$$\Gamma_p(n):=(-1)^n\prod\limits_{1\leq k<n\atop p\nmid k}k.$$
Moreover, set $\Gamma_p(0):=1$. In general, for any $x\in\mathbb{Z}_p$, the $p$-adic Gamma function $\Gamma_p(x)$ is defined as follows:
$$\Gamma_p(x):=\lim\limits_{|n-x|_p\rightarrow0}\Gamma_p(n).$$
Throughout this paper, we use $\<x\>_{p^n}$ to denote the least nonnegative residue of $x$ modulo $p^n$. In order to prove Theorems \ref{theorem1} and \ref{theorem2}, we need some properties of the $p$-adic Gamma function.
\begin{lemma}[Robert {\cite[p. 369]{Robert00}}]\label{robert}
Let $p$ be an odd prime. Then, for $x,y\in\mathbb{Z}_p$, we have
\begin{align*}
&\Gamma_p(x)\Gamma_p(1-x)=(-1)^{\<-x\>_p-1},\\
&\Gamma_p(x)\eq\Gamma_p(y)\pmod{p^n}\ \ \t{for}\ \ x\eq y\pmod{p^n},\\
&\f{\Gamma_p(x+1)}{\Gamma_p(x)}=\begin{cases}-x\ \ \t{if}&\ \ \nu_p(x)=0,\\
-1\ \ \t{if}&\ \ \nu_p(x)>0.
\end{cases}
\end{align*}
\end{lemma}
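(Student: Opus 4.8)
The plan is to derive all three identities directly from the definition $\Gamma_p(n)=(-1)^n\prod_{1\ls k<n,\,p\nmid k}k$ by first establishing each statement on the positive integers and then extending it to $\Z_p$ by continuity. Since the positive integers are dense in $\Z_p$ and $\Gamma_p$ is defined there as a limit of its integer values, the whole argument rests on controlling how $\Gamma_p(n)$ varies with $n$ modulo $p^N$; so I would prove the congruence (continuity) statement first, as it simultaneously guarantees that the defining limit exists.

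For that congruence, suppose $m,n\gs1$ with $n=m+tp^N$. Cancelling the defining products gives
$$\f{\Gamma_p(n)}{\Gamma_p(m)}=(-1)^{tp^N}\prod_{m\ls k<m+tp^N,\,p\nmid k}k.$$
The integers $m,m+1,\ldots,m+tp^N-1$ split into $t$ consecutive blocks of length $p^N$, each a complete residue system modulo $p^N$; by the Gauss--Wilson congruence for odd prime powers, the product of the units in each block is $\eq-1\pmod{p^N}$, so the displayed product is $\eq(-1)^t\pmod{p^N}$. As $p$ is odd, $(-1)^{tp^N}=(-1)^t$, whence $\Gamma_p(n)\eq\Gamma_p(m)\pmod{p^N}$. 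This Cauchy estimate shows the limit defining $\Gamma_p(x)$ exists for every $x\in\Z_p$ and yields the second identity by passing to the limit.

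Next I would verify the functional equation on positive integers: for $n\gs1$, comparing the defining products of $\Gamma_p(n+1)$ and $\Gamma_p(n)$ gives $\Gamma_p(n+1)/\Gamma_p(n)=-n$ when $p\nmid n$ and $=-1$ when $p\mid n$, and continuity extends this from the dense set of positive integers to all of $\Z_p$ (the two cases matching $\nu_p(x)=0$ and $\nu_p(x)>0$). For the reflection formula I would set $f(x)=\Gamma_p(x)\Gamma_p(1-x)$ and $g(x)=(-1)^{\<-x\>_p-1}$ and compare them under $x\mapsto x+1$. Writing $h$ for the factor from the functional equation, two applications give $f(x+1)/f(x)=h(x)/h(-x)$, which equals $-1$ when $p\nmid x$ and $1$ when $p\mid x$. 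A direct check shows $g$ obeys the identical recursion, tracking that $\<-x-1\>_p=\<-x\>_p-1$ when $p\nmid x$ and jumps from $0$ to $p-1$ when $p\mid x$ (here $p$ odd is used). Hence $f/g$ is invariant under translation by $1$; since $f(0)=\Gamma_p(0)\Gamma_p(1)=-1=g(0)$, it follows that $f=g$ on all integers, and continuity of both sides (the exponent $\<-x\>_p$ depends only on $x\bmod p$, so $g$ is locally constant) upgrades this to $f=g$ on $\Z_p$.

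I expect the Gauss--Wilson step to be the main obstacle: one must argue carefully that the product of the units in a full block of $p^N$ consecutive integers is $\eq-1\pmod{p^N}$ for odd $p$, via the structure of $(\Z/p^N\Z)^\times$, and then combine this correctly with the sign $(-1)^{tp^N}$. Once this estimate is in hand, all remaining steps are routine density-and-continuity arguments.
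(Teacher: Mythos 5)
Your proof is correct; the paper gives no argument for this lemma at all (it is quoted directly from Robert's book), and your derivation --- the Gauss--Wilson block estimate $\prod_{m\ls k<m+p^N,\,p\nmid k}k\eq-1\pmod{p^N}$ yielding the continuity/congruence statement and the existence of the defining limit, the functional equation read off from the definition on integers, and the reflection formula via matching one-step recursions for $f(x)=\Gamma_p(x)\Gamma_p(1-x)$ and $g(x)=(-1)^{\<-x\>_p-1}$ together with density of $\Z$ in $\Z_p$ --- is essentially the standard proof found in that source. The one point you flag as delicate is indeed the only nontrivial input, and it is secure: the product of all units of $(\Z/p^N\Z)^\times$ is $-1$ because that group is cyclic for odd $p$ (elements pair off with their inverses except $\pm1$), and combining it with the sign $(-1)^{tp^N}=(-1)^t$ for odd $p$ gives exactly the ratio $\eq1\pmod{p^N}$ you need.
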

Set $G_k(a):=\Gamma_p^{(k)}(a)/\Gamma_p(a)$. In particular, $G_0(a)=1$. The following lemma is due to Long and Ramakrishna \cite[Theorem 14]{LR}, which provides the $p$-adic expansions of $\Gamma_p(x)$.
\begin{lemma}\label{Long}
Let $p\geq7$ be a prime and $a,m\in\mathbb{Z}_p$. Then
$$\Gamma_p(a+mp)\equiv\Gamma_p(a)\sum_{k=0}^{3}\frac{G_k(a)}{k!}(mp)^k\pmod{p^4}.$$
\end{lemma}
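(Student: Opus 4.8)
The statement is a fourth‑order $p$-adic Taylor expansion, so the plan is to prove it as a genuine Taylor expansion of the locally analytic function $\Gamma_p$ around $a$ and to control the remainder by a valuation count. Since $\Gamma_p(a)$ is a unit, after dividing through it suffices to establish
$$\frac{\Gamma_p(a+mp)}{\Gamma_p(a)}=\sum_{k=0}^{\infty}\frac{G_k(a)}{k!}(mp)^k\quad\text{and}\quad\nu_p\!\left(\sum_{k\geq4}\frac{G_k(a)}{k!}(mp)^k\right)\geq4.$$
The first identity is precisely the assertion that $\Gamma_p$ is analytic on the residue disc $a+p\mathbb{Z}_p$ and equals its Taylor series there, a structural property of Morita's Gamma function; its convergence on $p\mathbb{Z}_p$ is automatic once the coefficients are known to be integral, as below. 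Granting the second ingredient, the conclusion is immediate.

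For the tail estimate the decisive input is the integrality $G_k(a)\in\mathbb{Z}_p$ for every $k$. Granting it, Legendre's formula $\nu_p(k!)=(k-s_p(k))/(p-1)$ gives, for $m\in\mathbb{Z}_p$ (so that $\nu_p(mp)\geq1$),
$$\nu_p\!\left(\frac{G_k(a)}{k!}(mp)^k\right)\geq k-\nu_p(k!)\geq\frac{k(p-2)}{p-1}.$$
For $p\geq7$ this is at least $5k/6$, which exceeds $4$ as soon as $k\geq5$; and for $k=4$ one has $\nu_p(4!)=0$ since $p>4$, so the $k=4$ term already has valuation $\geq4$. Hence every term with $k\geq4$ is divisible by $p^4$, which is exactly the tail bound. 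This is also the one place where the hypothesis $p\geq7$ is genuinely used.

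The remaining and hardest point is thus the integrality of the $G_k(a)$. The route I would take is through the logarithmic derivatives: writing $\Gamma_p=\exp(\log\Gamma_p)$ on the disc, Fa\`a di Bruno's formula expresses $G_k(a)=\Gamma_p^{(k)}(a)/\Gamma_p(a)$ as the complete Bell polynomial $B_k(d_1,\dots,d_k)$ in the quantities $d_j:=(\log\Gamma_p)^{(j)}(a)$. These $d_j$ are, up to sign and the factorial $(j-1)!$, the $p$-adic polygamma values at $a$, so that $d_j/(j-1)!\in\mathbb{Z}_p$ and $\nu_p(d_j)\geq\nu_p((j-1)!)$. Substituting these bounds into $B_k=\sum k!\big/\big(\prod_j(j!)^{m_j}m_j!\big)\prod_j d_j^{m_j}$, the sum running over $\sum_j jm_j=k$, and checking that the resulting valuation $\nu_p(k!)-\sum_j\nu_p(m_j!)-\sum_j m_j\nu_p(j)$ is nonnegative yields $G_k(a)\in\mathbb{Z}_p$. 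I expect precisely this last bookkeeping—reconciling the factorial denominators of the Bell polynomials against the $p$-divisibility supplied by the polygamma values—to be the main technical obstacle. Everything else is the soft structure theory of $\Gamma_p$ (local analyticity on residue discs, together with the functional equation recorded in Lemma~\ref{robert}) combined with the clean valuation count above.
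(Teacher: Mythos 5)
The first thing to note is that the paper does not prove this statement at all: it is quoted verbatim from Long and Ramakrishna \cite[Theorem 14]{LR}, so there is no internal proof to compare against, and the real question is whether your argument would stand on its own. Your outer skeleton is sound and is indeed the natural one: local analyticity of $\Gamma_p$ on the residue disc $a+p\mathbb{Z}_p$ (citable, e.g.\ to \cite{Robert00}), plus the tail estimate $\nu_p\bigl(G_k(a)(mp)^k/k!\bigr)\geq k-\nu_p(k!)\geq 4$ for $k\geq4$, where your computation is correct (for $k=4$ one uses $p>4$, and for $k\geq5$ the bound $k(p-2)/(p-1)\geq 5k/6>4$ uses $p\geq7$). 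But the proposal has a genuine gap exactly where you flag it: the integrality $G_k(a)\in\mathbb{Z}_p$ is never proved, only reduced via Fa\`a di Bruno to the unproved assertion that $d_j/(j-1)!\in\mathbb{Z}_p$ for the logarithmic derivatives $d_j=(\log\Gamma_p)^{(j)}(a)$. Curiously, you misplace the difficulty: the Bell-polynomial bookkeeping you call ``the main technical obstacle'' closes immediately, since $\frac{k!}{\prod_j (j!)^{m_j}m_j!}\prod_j\bigl((j-1)!\bigr)^{m_j}=\frac{k!}{\prod_j j^{m_j}m_j!}$ is the number of permutations of cycle type $(m_j)$, hence an integer. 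The unproved input is the polygamma integrality itself, which carries essentially the full depth of the lemma.

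Moreover, the blanket claims ``$G_k(a)\in\mathbb{Z}_p$ for every $k$'' and ``$d_j/(j-1)!\in\mathbb{Z}_p$ for every $j$'' are not safe as stated. Writing $e_j(x)=(\log\Gamma_p)^{(j)}(x)/(j-1)!$, the shift and reflection formulas of Lemma \ref{robert} give $e_j(x+1)-e_j(x)=(-1)^{j-1}x^{-j}$ for units $x$ and $e_j(0)=0$ for even $j$, so everything reduces to $e_j(0)$ for odd $j$; for odd $j\equiv1\pmod{p-1}$ (the first case being $j=p$) this is a $p$-adic zeta-type value on the exceptional branch, where a denominator of size up to $p^{1+\nu_p(j-1)}$ can occur, so integrality may genuinely fail there. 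The lemma itself survives because such $j$ satisfy $j\geq p$ and only contaminate Taylor terms with $k\geq p$, where $(mp)^k$ supplies far more than $p^4$ even against a bounded denominator --- but your proof as written does not make this repair, and without it the argument is incomplete precisely at the exceptional indices. This is the delicate point that Long and Ramakrishna's proof actually handles, by establishing the needed valuation bounds for the local Taylor coefficients directly (in the spirit of the explicit harmonic-sum formulas for $G_1$ and $G_2$ modulo $p^2$ that the present paper quotes from \cite{WP} and \cite{PTW}). In short: correct architecture and correct tail arithmetic, but the central integrality input is assumed rather than proved, and in its stated generality it is likely false.
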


\begin{lemma}\label{g}
Let $p\geq7$ be a prime and $a,m\in\mathbb{Z}_p$. Then
$$\Gamma_p(a+mp)\Gamma_p(a-mp)\equiv\Gamma_p(a)^2\bigg(1+m^2p^2\sum\limits_{1\leq j\leq\<-a\>_{p^2}\atop p\nmid j}\frac{1}{j^2}\bigg)\pmod{p^4}.$$
\end{lemma}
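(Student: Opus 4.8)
The plan is to expand each factor by Lemma~\ref{Long} and multiply. Writing $x=mp$, Lemma~\ref{Long} gives
$$\Gamma_p(a\pm mp)\equiv\Gamma_p(a)\Big(1\pm G_1(a)x+\tfrac{G_2(a)}{2}x^2\pm\tfrac{G_3(a)}{6}x^3\Big)\pmod{p^4}.$$
Since $\Gamma_p(a)$ is a $p$-adic unit and the $G_k(a)$ lie in $\mathbb{Z}_p$, I may multiply the two congruences. Grouping the resulting second factor as $(A+B)(A-B)=A^2-B^2$ with $A=1+\tfrac{G_2(a)}{2}x^2$ and $B=G_1(a)x+\tfrac{G_3(a)}{6}x^3$ makes the computation transparent: $A^2-B^2$ contains no odd powers of $x$, and its $x^4$ and $x^6$ terms vanish modulo $p^4$ because $x^4=m^4p^4$. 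Only the constant and the $x^2$ term survive, the latter with coefficient $G_2(a)-G_1(a)^2$, so
$$\Gamma_p(a+mp)\Gamma_p(a-mp)\equiv\Gamma_p(a)^2\big(1+(G_2(a)-G_1(a)^2)\,m^2p^2\big)\pmod{p^4}.$$
Hence the whole lemma reduces to the single identity $G_2(a)-G_1(a)^2\equiv\sum_{1\le j\le\<-a\>_{p^2},\,p\nmid j}1/j^2\pmod{p^2}$; write $T(a)$ for the left side and $S(a)$ for the right side.

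To prove $T\equiv S$, I observe that $T(a)=(\log\Gamma_p)''(a)=G_2(a)-G_1(a)^2$ is the $p$-adic trigamma. Differentiating the functional equation from Lemma~\ref{robert} in the algebraic form $\Gamma_p(x+1)=-x\Gamma_p(x)$ (and $\Gamma_p(x+1)=-\Gamma_p(x)$ when $p\mid x$) twice and dividing by $\Gamma_p(x+1)$ yields the difference equation $T(x+1)-T(x)=-1/x^2$ for $p\nmid x$ and $T(x+1)=T(x)$ for $p\mid x$. On the combinatorial side, $\<-a\>_{p^2}$ drops by $1$ as $a\mapsto a+1$, so $S(a+1)-S(a)\equiv-1/a^2\pmod{p^2}$ when $p\nmid a$ (using $1/\<-a\>_{p^2}^2\equiv1/a^2$) and $S(a+1)=S(a)$ when $p\mid a$; at the wrap-around $\<-a\>_{p^2}=0$ both differences vanish thanks to the Wolstenholme-type congruence $\sum_{1\le j<p^2,\,p\nmid j}1/j^2\equiv0\pmod{p^2}$. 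That congruence also shows $T$ and $S$ are each invariant under $a\mapsto a+p^2$ modulo $p^2$, so $T-S$ is constant modulo $p^2$. I fix this constant to be $0$ by differentiating the reflection formula $\Gamma_p(x)\Gamma_p(1-x)=(-1)^{\<-x\>_p-1}$ twice, which gives $T(1-a)=-T(a)$, and by checking $S(1-a)\equiv-S(a)$ directly through the substitution $j\mapsto p^2-j$; a constant equal to its own negative must vanish since $p$ is odd.

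The main obstacle is this trigamma identity, together with two technical points inside it. The first is handling the formal $p$-adic derivatives cleanly: I will differentiate the algebraic relations $\Gamma_p(x+1)=-x\Gamma_p(x)$ and $\Gamma_p(x)\Gamma_p(1-x)=\pm1$ rather than invoke the $p$-adic logarithm, so that only the quotients $G_k=\Gamma_p^{(k)}/\Gamma_p$ ever appear. The second is promoting the identity from integer arguments, where the difference equation telescopes cleanly from the base value $T(0)=0$ (itself forced by $T(1)=T(0)=-T(0)$), to all $a\in\mathbb{Z}_p$; this uses the $p$-adic continuity of $\Gamma_p$ together with the fact, supplied by the Wolstenholme-type congruence, that both $T$ and $S$ depend only on $a$ modulo $p^2$. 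The hypothesis $p\ge7$ is exactly what is needed for Lemma~\ref{Long} and for the Wolstenholme-type congruence to hold.
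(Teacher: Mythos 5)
Your proposal is correct, and its opening move is exactly the paper's: expand both factors by Lemma~\ref{Long}, multiply, note that odd powers of $x=mp$ cancel and the $x^4$-terms vanish modulo $p^4$ (the denominators $2,3,6$ being units for $p\geq7$), and thereby reduce the lemma to the single congruence $G_2(a)-G_1(a)^2\equiv\sum_{1\leq j\leq\langle-a\rangle_{p^2},\,p\nmid j}j^{-2}\pmod{p^2}$. Where you genuinely diverge is in proving this congruence. The paper imports explicit mod-$p^2$ expansions from the literature --- Wang--Pan's formula for $G_1(a)$ \cite{WP}, Pan--Tauraso--Wang's for $G_2(a)$ \cite{PTW}, and $G_2(0)=G_1(0)^2$ --- obtaining $G_2(a)-G_1(a)^2\equiv-\sum_{1\leq j<\langle a\rangle_{p^2},\,p\nmid j}j^{-2}$, and then flips the range of summation using Slavutsky's congruence $\sum_{1\leq j\leq p^2,\,p\nmid j}j^{-2}\equiv0\pmod{p^2}$ \cite{Slavutsky}. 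You instead characterize both sides intrinsically: differentiating $\Gamma_p(x+1)=-x\Gamma_p(x)$ twice on the open set $\mathbb{Z}_p^\times$ does give $G_1(x+1)=G_1(x)+1/x$ and $G_2(x+1)=G_2(x)+2G_1(x)/x$, hence $T(x+1)-T(x)=-1/x^2$ (and $T(x+1)=T(x)$ on $p\mathbb{Z}_p$); your $S$ satisfies the same relations modulo $p^2$ (including at the wrap-around, where Slavutsky's congruence enters); telescoping over a full period shows $T$ mod $p^2$ is invariant under $a\mapsto a+p^2$, so by continuity of $T$ and density of $\mathbb{N}$ in $\mathbb{Z}_p$ the difference $T-S$ is constant modulo $p^2$; and the reflection computation $T(1-a)=-T(a)$ (which I checked: $G_1(1-x)=G_1(x)$ and $G_2(x)+G_2(1-x)=2G_1(x)^2$) together with $S(1-a)\equiv-S(a)$ via $j\mapsto p^2-j$ forces the constant to vanish since $p$ is odd. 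All of these computations are right. What your route buys is self-containedness: it replaces the cited expansions of \cite{WP} and \cite{PTW} by the functional and reflection equations of Lemma~\ref{robert}, at the cost of a longer argument and of two hypotheses you should make explicit rather than leave implicit --- namely that $\Gamma_p$ is locally analytic on $\mathbb{Z}_p$ (so that $\Gamma_p'$, $\Gamma_p''$ exist, are continuous, and $G_k(a)\in\mathbb{Z}_p$; this is part of the Long--Ramakrishna framework behind Lemma~\ref{Long}, see \cite{LR}), and that the functional equations may be differentiated because $\mathbb{Z}_p^\times$ and $p\mathbb{Z}_p$ are open subsets on which they hold identically. With those two citations spelled out, your proof is complete and is an essentially different, more conceptual derivation of the paper's key trigamma congruence.
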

\begin{proof}
In view of Lemma \ref{Long}, we have
$$\Gamma_p(a+mp)\equiv\Gamma_p(a)\sum_{k=0}^{3}\frac{G_k(a)}{k!}(mp)^k\pmod{p^4}$$
and
$$\Gamma_p(a-mp)\equiv\Gamma_p(a)\sum_{k=0}^{3}\frac{G_k(a)}{k!}(-mp)^k\pmod{p^4}.$$
Then
$$\Gamma_p(a+mp)\Gamma_p(a-mp)\equiv\Gamma_p(a)^2(1+m^2p^2(G_2(a)-G_1(a)^2))\pmod{p^4}.$$
Therefore it remains to prove that
$$G_2(a)-G_1(a)^2\equiv\sum\limits_{1\leq j\leq\<-a\>_{p^2}\atop p\nmid j}\frac{1}{j^2}\pmod{p^2}.$$
Recall that Wang and Pan \cite[Lemma 2.4]{WP} deduced the explicit formula of $G_1(a)$ modulo $p^2$,
$$G_1(a)\equiv G_1(0)+\sum\limits_{1\leq j<\<a\>_{p^2}\atop p\nmid j}\frac{1}{j}\pmod{p^2}.$$
Furthermore, according to Pan, Tauraso and Wang \cite[Theorem 4.1]{PTW}, we have
$$G_2(a)\equiv G_2(0)+2G_1(0)\sum\limits_{1\leq j<\<a\>_{p^2}\atop p\nmid j}\frac{1}{j}+2\sum\limits_{1\leq i<j<\<a\>_{p^2}}\frac{1}{ij}\pmod{p^2}.$$
Combining the above two congruences and noting that $G_2(0)=G_1(0)^2$ (cf. \cite[Lemma 2.3]{WP}), we arrive at
$$G_2(a)-G_1(a)^2\equiv-\sum\limits_{1\leq j<\<a\>_{p^2}\atop p\nmid j}\frac{1}{j^2}\pmod{p^2}.$$
It is known (cf. \cite[Corollary 1(a)]{Slavutsky}) that
$$\sum\limits_{1\leq j\leq p^2\atop p\nmid j}\frac{1}{j^2}\equiv0\pmod{p^2}.$$
Then
$$-\sum\limits_{1\leq j<\<a\>_{p^2}\atop p\nmid j}\frac{1}{j^2}\equiv\sum\limits_{1\leq j\leq p^2-\<a\>_{p^2}}\frac{1}{(p^2-j)^2}\equiv\sum\limits_{1\leq j\leq p^2-\<a\>_{p^2}}\frac{1}{j^2}=\sum\limits_{1\leq j\leq \<-a\>_{p^2}}\frac{1}{j^2}\pmod{p^2}.$$
This proves the desired result.
\end{proof}
\section{Proof of Theorem \ref{theorem1}}

Throughout this paper, we use $i$ to denote the imaginary unit.
\begin{lemma}[Wang {\cite[Lemma 3.1]{Wang}}]\label{lemw}
Let $p$ be an odd prime. Then, for $u,v\in\Z_p$ and $k\in\{0,1,\ldots,\<-u\>_p\}$,
\begin{equation*}
(u+vp)_k(u-vp)_k(u+vip)_k(u-vip)_k\eq(u)_k^4\l(1+v^4p^4\sum_{j=0}^{k-1}\f{1}{(u+j)^4}\r)\pmod{p^5}.
\end{equation*}
\end{lemma}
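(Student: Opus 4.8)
The plan is to collapse the four shifted Pochhammer symbols into a single product over the running index and to exploit that the four perturbations $vp,-vp,vip,-vip$ are precisely $vp$ times the four fourth roots of unity. First I would write each factor as a product and regroup the terms sharing the same index $j$, so that the left-hand side becomes
\begin{equation*}
(u+vp)_k(u-vp)_k(u+vip)_k(u-vip)_k=\prod_{j=0}^{k-1}\ \prod_{\zeta^4=1}\l((u+j)-vp\,\zeta\r).
\end{equation*}
For each fixed $j$, multiplying the two conjugate pairs $(u+j)\pm vp$ and $(u+j)\pm vip$ and invoking $i^2=-1$ collapses the inner product into a single quartic in $w=u+j$ that agrees with $w^4$ up to a term of $p$-adic size $|v^4p^4|_p$ (this is the factorization $\prod_{\zeta^4=1}(X-\zeta)=X^4-1$ with $X=w/(vp)$). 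Hence the whole left-hand side is a product of $(u)_k^4=\prod_{j=0}^{k-1}(u+j)^4$ with $k$ unit-factors, each lying within $|v^4p^4|_p$ of $1$.

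Next I would factor out $(u)_k^4$ and expand the remaining product of $k$ factors to first order. The first-order contributions assemble into the correction term $v^4p^4\sum_{j=0}^{k-1}(u+j)^{-4}$, and the crucial estimate is that everything neglected is $O(p^8)$: each factor deviates from $1$ by a quantity of $p$-adic valuation at least $4$, so any product of two or more of these deviations has valuation at least $8$. Thus the first-order truncation is exact modulo $p^8$, a fortiori modulo $p^5$, and multiplying back by the unit $(u)_k^4$ reproduces the right-hand side of Lemma \ref{lemw} modulo $p^5$.

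The main obstacle is not computation but a single bookkeeping point: one must guarantee that no factor $u+j$ appearing in the product is divisible by $p$. This is exactly where the hypothesis enters, through the cutoff $k\ls\<-u\>_p$. Indeed $u+\<-u\>_p$ is the first shift congruent to $0$ modulo $p$, and since the index $j$ ranges only over $0\ls j\ls k-1<\<-u\>_p$, it never reaches $\<-u\>_p$; consequently every $u+j$ is a $p$-adic unit, $(u+j)^{-4}\in\Z_p$, and $(u)_k$ is itself a unit. Were some $u+j$ divisible by $p$, both the reciprocals $(u+j)^{-4}$ and the clean ``deviation of valuation at least $4$, cross terms at least $8$'' dichotomy---which is what pins the error at the right power of $p$---would break down. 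Once this range check is in place, the fourth-root factorization and the one-line expansion finish the argument, so I expect the proof to be short.
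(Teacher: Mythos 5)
Your strategy is the natural one, and almost certainly the one behind the cited source: the paper itself gives no argument for this lemma, quoting it from \cite[Lemma 3.1]{Wang}, so the direct route you take---pairing the four factors index-by-index via $\prod_{\zeta^4=1}\l((u+j)-vp\zeta\r)$, factoring out $(u)_k^4$, truncating at first order with cross terms of valuation at least $8$, and using $j\ls k-1<\<-u\>_p$ to keep every $u+j$ a $p$-adic unit---is exactly the standard proof. Your bookkeeping on the cutoff is correct (allowing $k=\<-u\>_p$ is fine, since only $j\ls k-1$ occurs), and your error analysis in fact establishes the congruence modulo $p^8$, stronger than the stated $p^5$.

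There is, however, a concrete sign problem that your writeup glosses over: your own algebra does not produce the $+$ you assert. The fourth-roots-of-unity factorization gives $\prod_{\zeta^4=1}\l((u+j)-vp\zeta\r)=(u+j)^4-v^4p^4$, so each local deviation from $1$ is $-v^4p^4/(u+j)^4$, and the first-order expansion yields
\begin{equation*}
(u+vp)_k(u-vp)_k(u+vip)_k(u-vip)_k=\prod_{j=0}^{k-1}\l((u+j)^4-v^4p^4\r)\eq(u)_k^4\l(1-v^4p^4\sum_{j=0}^{k-1}\f{1}{(u+j)^4}\r)\pmod{p^8},
\end{equation*}
with a minus sign, not the plus sign in your penultimate sentence. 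The plus version cannot be rescued modulo $p^5$: already for $k=1$ the left-hand side equals $u^4-v^4p^4$ exactly, while $(u)_1^4\l(1+v^4p^4u^{-4}\r)=u^4+v^4p^4$, and the discrepancy $2v^4p^4$ is not divisible by $p^5$ when $u,v$ are units. So the statement as printed carries a sign typo, and your argument, carried out faithfully, proves the corrected (minus-sign) form; as written, your conclusion contradicts your own expansion, and you should flag the typo rather than silently match it. The slip is harmless downstream: in \eqref{t1} and \eqref{h1} the sign of the $p^4$-correction is immaterial, because Lemmas \ref{lemma32} and \ref{lemma42} show the corresponding weighted sums vanish modulo $p$, so the proofs of Theorems \ref{theorem1} and \ref{theorem2} go through with either sign.
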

\begin{lemma}\label{lemma31}
Under the assumptions of Theorem \ref{theorem1}, we have
$$\sum_{k=0}^{(p-r)/5}(10k+r)\frac{(\frac{r}{5})_k^5}{(1)_k^5}\equiv0\pmod p.$$
\end{lemma}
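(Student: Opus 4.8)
The plan is to prove the congruence by exploiting the reflection symmetry $k\mapsto N-k$ of the summand, where $N:=(p-r)/5$. First I would record three elementary facts about $N$: it is a nonnegative integer because $p\equiv r\pmod 5$; it is \emph{even}, because $5N=p-r$ is a difference of two odd numbers and $5$ is odd; and, reducing $5N+r=p\equiv0\pmod p$, it satisfies $\f r5\equiv -N\pmod p$. Writing $t_k:=(10k+r)\f{(\f r5)_k^5}{(1)_k^5}$ for the summand, I note that each $t_k$ with $0\le k\le N$ is $p$-integral: the factor $(1)_k=k!$ is a $p$-adic unit for $k<p$, and since $\f r5+j\equiv j-N\not\equiv0\pmod p$ for $0\le j\le N-1$, the Pochhammer symbol $(\f r5)_k$ is a unit as well.

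Next I would compute the effect of the reflection modulo $p$. Using $\f r5+j\equiv j-N\pmod p$ and collapsing the resulting products to factorials gives $(\f r5)_k\equiv(-1)^k\f{N!}{(N-k)!}$ and $(\f r5)_{N-k}\equiv(-1)^{N-k}\f{N!}{k!}\pmod p$, while the linear factor transforms as $10(N-k)+r=2p-r-10k\equiv -(10k+r)\pmod p$. Assembling these (and noting $(1)_{N-k}=(N-k)!$) and cancelling the common factor $\f{(N!)^5}{(k!)^5((N-k)!)^5}$ yields the key relation $t_{N-k}\equiv -(-1)^N t_k\pmod p$. Since $N$ is even, this reads simply $t_{N-k}\equiv -t_k\pmod p$.

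Finally, reindexing $S:=\sum_{k=0}^N t_k$ by $k\mapsto N-k$ and applying the key relation gives $S\equiv -S\pmod p$, so $2S\equiv0\pmod p$; as $p$ is odd this forces $S\equiv0\pmod p$, which is the claim. I expect the only delicate point to be the sign and parity bookkeeping: the whole argument hinges on $N$ being even, for otherwise the reflection collapses to the vacuous identity $S\equiv S$ and carries no information. The fifth powers are actually helpful here, since they make the sign $(-1)^{5(N-k)}=(-1)^{N-k}$ transparent, and the remaining manipulations are routine reductions of Pochhammer symbols modulo $p$. (Note that one cannot simply invoke the Guo--Liu--Schlosser evaluation \eqref{GLS6F5}, since that requires $p\equiv 2r\pmod 5$ rather than the present $p\equiv r\pmod 5$.)
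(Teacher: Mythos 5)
Your proof is correct, but it takes a genuinely different route from the paper's. The paper obtains this lemma as a byproduct of its transformation machinery: it specializes the new ${}_7F_6$ transformation \eqref{new} with $m=\frac{1-r}{2}$, $t=\frac{r}{5}$, $n=\frac{p-r}{5}$, $a=\frac{r-5}{10}$, $b=-\frac{p}{5}$, $c=-\frac{ip}{5}$ (this is \eqref{n1}); the ${}_7F_6$ side then terminates at $n=(p-r)/5$ and reduces modulo $p$ to $\frac{1}{r}\sum_{k=0}^{(p-r)/5}(10k+r)(\frac{r}{5})_k^5/(1)_k^5$, while the right-hand side carries the factor $(1+\frac{r}{5})_n=(1+\frac{r}{5})(2+\frac{r}{5})\cdots\frac{p}{5}\equiv0\pmod{p}$, all other Pochhammer factors being $p$-adic units, whence the sum vanishes mod $p$. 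Your reflection argument $k\mapsto N-k$ with $N=(p-r)/5$ is elementary and self-contained, and every step checks out: the three preliminary facts ($N\in\mathbb{N}$, $N$ even, $\frac{r}{5}\equiv-N\pmod{p}$) are right; the unit verifications are right (indeed $N\le\frac{p-1}{3}<p$ under $p\ge(5-3r)/2$, as the paper also notes); the sign computation $t_{N-k}\equiv-(-1)^{N}t_k\pmod{p}$ is correct, with the fifth powers contributing $(-1)^{5(N-k)}=(-1)^{N-k}$ as you say; and the fixed point $k=N/2$ causes no trouble since its linear factor is $10(N/2)+r=5N+r=p$ itself, consistent with $t_{N/2}\equiv-t_{N/2}$. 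It is worth pointing out that your device is precisely the one the paper deploys for the companion Lemma \ref{lemma32}, where the same parity observation (``$(p-r)/5$ is even since $p$ and $r$ are odd'') and the identity $(\frac{r}{5})_{(p-r)/5}=(-1)^{(p-r)/5}(1-\frac{p}{5})_{(p-r)/5}$ appear. As for what each approach buys: yours isolates the mod $p$ vanishing as a pure symmetry phenomenon needing no hypergeometric input, while the paper's specialization is reused verbatim as \eqref{new3} in the proof of Theorem \ref{theorem1} for the modulus $p^5$ refinement, so deriving the lemma from it is essentially free in context. Your closing caveat is also accurate: \eqref{GLS6F5} requires $p\equiv2r\pmod{5}$ and cannot simply be invoked under the present hypothesis $p\equiv r\pmod{5}$.
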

\begin{proof}Setting $m=\frac{1-r}{2}$, $t=\f{r}{5}$, $n=\frac{p-r}{5}$, $a=\f{r-5}{10}$, $b=-\f{p}{5}$ and $c=-\f{ip}{5}$ in \eqref{new}, we have
\begin{align}\label{n1}
&{}_7F_6\bigg[\begin{matrix}\frac{r}{5},&1+\frac{r}{10},&\frac{r-p}{5},&\frac{r+5}{10},&\frac{r+p}{5},&\frac{r+ip}{5},&\frac{r-ip}{3}\\ &\f r{10},&1+\f p5,&\f{r+5}{10},&1-\f p5,&1-\f{ip}{5},&1+\f{ip}{5}\end{matrix}\bigg|\ 1\bigg]\notag\\
=\ &{}_4F_3\bigg[\begin{matrix}\f{r-1}{2},&\f{r-p}{5},&\f{r+p}{5},&\f{r}{5}\\ &\f{2r-ip}{5},&\f{1}{2}+\f{3r}{10},&\f{2r+ip}{5}\end{matrix}\bigg|\ 1\bigg]\notag\\
&\times\f{(1+\f{r}{5})_n(\f12-\f{r}{10}-\f{p}5)_n(\f{2r}{5}-\f{ip}5)_n(1-\f{r}5-\f{(1+i)p}{5})_n}{(\f12+\f{r}{10})_n(1-\frac{p}{5})_n(1-\f{ip}{5})_n(\f{r}{5}-\f{(1+i)p}{5})_n}.
\end{align}
Since $r\leq1$ and $p\geq (5-3r)/2$, we have $\f{1-r}{2}\leq\f{p-r}{5}\leq\f{p-1}3$. Thus
\begin{align*}
&{}_7F_6\bigg[\begin{matrix}\frac{r}{5},&1+\frac{r}{10},&\frac{r-p}{5},&\frac{r+5}{10},&\frac{r+p}{5},&\frac{r+ip}{5},&\frac{r-ip}{3}\\ &\f r{10},&1+\f p5,&\f{r+5}{10},&1-\f p5,&1-\f{ip}{5},&1+\f{ip}{5}\end{matrix}\bigg|\ 1\bigg]\notag\\
=\ &{}_6F_5\bigg[\begin{matrix}\frac{r}{5},&1+\frac{r}{10},&\frac{r-p}{5},&\frac{r+p}{5},&\frac{r+ip}{5},&\frac{r-ip}{3}\\ &\f r{10},&1+\f p5,&1-\f p5,&1-\f{ip}{5},&1+\f{ip}{5}\end{matrix}\bigg|\ 1\bigg]_{\f{p-r}{5}}\notag\\
\equiv\ &\f1r\sum_{k=0}^{(p-r)/5}(10k+r)\f{(\f r5)_k^5}{(1)_k^5}\pmod{p}.
\end{align*}
Noting that
$$\l(1+\f{r}{5}\r)_n=\l(1+\f{r}{5}\r)\l(2+\f{r}{5}\r)\cdots\f{p}{5}\equiv0\pmod{p}$$
and
\begin{align*}
&\l(\f12+\f{r}{10}\r)_n\l(1-\frac{p}{5}\r)_n\l(1-\f{ip}{5}\r)_n\l(\f{r}{5}-\f{(1+i)p}{5}\r)_n\\
&\times\l(\f{2r-ip}{5}\r)_n\l(\f{1}{2}+\f{3r}{10}\r)_n\l(\f{2r+ip}{5}\r)_n\not\equiv0\pmod{p},
\end{align*}
we get that the right-hand side of \eqref{n1} is equivalent to 0 modulo $p$. Therefore
$$\sum_{k=0}^{(p-r)/5}(10k+r)\frac{(\frac{r}{5})_k^5}{(1)_k^5}\equiv0\pmod p.$$
\end{proof}

\begin{lemma}\label{lemma32}
Under the assumptions of Theorem \ref{theorem1}, we have
$$\sum_{k=0}^{(p-r)/5}(10k+r)\frac{(\frac{r}{5})_k^5}{(1)_k^5}\left(\sum_{j=0}^{k-1}\frac{1}{(r/5+j)^4}-\sum_{j=0}^{k-1}\frac{1}{(1+j)^4}\right)\equiv0\pmod p.$$
\end{lemma}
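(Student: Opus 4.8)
The plan is to prove the congruence by the reflection $k\mapsto N-k$ on the index range, where $N:=(p-r)/5$. Write
\[
T_k:=(10k+r)\frac{(\frac r5)_k^5}{(1)_k^5},\qquad H_k:=\sum_{j=0}^{k-1}\frac{1}{(r/5+j)^4}-\sum_{j=0}^{k-1}\frac{1}{(1+j)^4},
\]
so the sum in question is $S:=\sum_{k=0}^N T_kH_k$. I will show that modulo $p$ each factor reflects in a controlled way, namely $T_{N-k}\equiv-T_k$ and $H_{N-k}\equiv H_k\pmod p$, making the summand $T_kH_k$ antisymmetric under $k\mapsto N-k$. Reindexing the finite sum by the involution $k\mapsto N-k$ then gives $S=\sum_{k=0}^N T_{N-k}H_{N-k}\equiv-\sum_{k=0}^N T_kH_k=-S\pmod p$, so $2S\equiv0$ and, as $p$ is odd, $S\equiv0\pmod p$. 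A preliminary observation is that $N$ is even: since $p\equiv r\pmod5$ we have $5\mid p-r$, and since $p$ and $r$ are both odd, $2\mid p-r$; hence $10\mid p-r$ and $N=(p-r)/5$ is even.

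For the reflection of $T_k$ I would use the elementary identity $(a)_{N-k}=(-1)^k(a)_N/(1-a-N)_k$. With $a=r/5$ one has $1-a-N=1-p/5$, and with $a=1$ one has $1-a-N=-N$. Reducing modulo $p$ and using $p/5\equiv0$ and $-N\equiv r/5\pmod p$ (both coming from $5N=p-r\equiv-r$), together with $(\frac r5)_N/N!=\binom{p/5-1}{N}\equiv\binom{-1}{N}=(-1)^N\pmod p$, the $(-1)^k$ factors cancel and one gets
\[
\frac{(\frac r5)_{N-k}}{(N-k)!}\equiv(-1)^N\,\frac{(\frac r5)_k}{k!}\equiv\frac{(\frac r5)_k}{k!}\pmod p,
\]
the last step using that $N$ is even. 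Raising to the fifth power and combining with $10(N-k)+r\equiv-(10k+r)\pmod p$ (from $10N+r=2p-r\equiv-r$) yields $T_{N-k}\equiv-T_k\pmod p$. All denominators occurring here --- $(N-k)!$ and the factors $r/5+j$, $1+j$ for $0\le j\le N-1$ --- are $p$-adic units, so the congruences make sense.

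The heart of the matter is the reflection $H_{N-k}\equiv H_k\pmod p$. Writing $P_m:=\sum_{j=0}^{m-1}(r/5+j)^{-4}$ and $Q_m:=\sum_{j=0}^{m-1}(1+j)^{-4}$, I split off the tail $P_{N-k}=P_N-\sum_{j=N-k}^{N-1}(r/5+j)^{-4}$; reindexing by $j=N-1-i$ and using $r/5+N-1-i=p/5-1-i\equiv-(1+i)\pmod p$ identifies the tail with $Q_k$, so $P_{N-k}\equiv P_N-Q_k$. Symmetrically, the tail of $Q_{N-k}$ reindexes via $N-i\equiv-r/5-i\pmod p$ to $P_k$, giving $Q_{N-k}\equiv Q_N-P_k$. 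Subtracting, $H_{N-k}=P_{N-k}-Q_{N-k}\equiv(P_N-Q_N)+(P_k-Q_k)=H_N+H_k$. Finally the case $k=N$ of the first relation reads $0=P_0\equiv P_N-Q_N$, i.e.\ $H_N\equiv0\pmod p$, so $H_{N-k}\equiv H_k\pmod p$ as needed.

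Combining the two reflections gives $T_{N-k}H_{N-k}\equiv-T_kH_k\pmod p$, and the antisymmetry argument of the first paragraph then finishes the proof. The step I expect to be most delicate is this harmonic reflection, and in particular the vanishing $H_N\equiv0\pmod p$: it is precisely the mechanism $p/5\equiv0\pmod p$ together with the exact endpoint $r/5+N=p/5$ that converts the shifted fourth-power sum $P_N$ into the ordinary one $Q_N$, and one must take care that only the $p$-adically finite terms survive each reindexing. Everything else reduces to routine manipulation of Pochhammer symbols and power sums.
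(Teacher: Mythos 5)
Your proof is correct, and it runs on the same engine as the paper's --- the reflection $k\mapsto N-k$ with $N=(p-r)/5$, the parity fact that $N$ is even, and the key reductions $r/5+N=p/5\equiv 0$ and $N\equiv -r/5\pmod p$ --- but you complete the argument differently. The paper reflects only the sum $\sum_k T_k Q_k$, which converts $Q_k$ into the tail $\sum_{j=k}^{N-1}(r/5+j)^{-4}$ of the shifted harmonic sum; the two pieces then recombine into the constant $P_N=\sum_{j=0}^{N-1}(r/5+j)^{-4}$ times $\sum_k T_k$, and the paper invokes its Lemma 3.1 ($\sum_k T_k\equiv 0\pmod p$, proved via the ${}_7F_6$ transformation) to finish. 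You instead reflect the entire summand, proving $T_{N-k}\equiv -T_k$ and $H_{N-k}\equiv H_k\pmod p$ (the latter via $P_{N-k}\equiv P_N-Q_k$, $Q_{N-k}\equiv Q_N-P_k$, and the vanishing $H_N\equiv 0$, which I verified are correct, including the unit status of all denominators under the hypothesis $p\geq(5-3r)/2$, which guarantees $|r|<p$), and conclude $S\equiv -S$, hence $S\equiv 0$ since $p$ is odd. The trade-off: your route is self-contained --- it does not need Lemma 3.1 at all, with the identity $P_N\equiv Q_N\pmod p$ playing the role that $\sum_k T_k\equiv 0$ plays in the paper --- while the paper's route is shorter given that Lemma 3.1 is already in hand for the main theorem anyway. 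Note also that the fixed point $k=N/2$ of your involution causes no trouble, since the $2S\equiv 0$ argument absorbs it uniformly.
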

\begin{proof}Since both $p$ and $r$ are odd, we have $(p-r)/5$ is even and hence
 $$\left(\frac{r}{5}\right)_{(p-r)/5}=(-1)^{(p-r)/5}\left(1-\frac{p}{5}\right)_{(p-r)/5}\equiv(1)_{(p-r)/5}\pmod{p}.$$
It is routine to check that
\begin{align*}
&\sum_{k=0}^{(p-r)/5}(10k+r)\frac{(\frac{r}{5})_k^5}{(1)_k^5}\sum_{j=0}^{k-1}\frac{1}{(1+j)^4}\\
=\ &\sum_{k=0}^{(p-r)/5}\left(10\left(\frac{p-r}{5}-k\right)+r\right)\frac{(\frac{r}{5})_{(p-r)/5-k}^5}{(1)_{(p-r)/5-k}^5}\sum_{j=0}^{(p-r)/5-k-1}\frac{1}{(1+j)^4}\\
=\ &\frac{(\frac{r}{5})_{(p-r)/5}^5}{(1)_{(p-r)/5}^5}\sum_{k=0}^{(p-r)/5}(2p-10k-r)\frac{(\frac{r}{5}-\frac{p}{5})_k^5}{(1-\frac{p}{5})_k^5}\sum_{j=k}^{(p-r)/5-1}\frac{1}{((p-r)/5-j)^4}\\
\equiv\ &-\sum_{k=0}^{(p-r)/5}(10k+r)\frac{(\frac{r}{5})_k^5}{(1)_k^5}\sum_{j=k}^{(p-r)/5-1}\frac{1}{(r/5+j)^4}\pmod{p}.
\end{align*}
In view of Lemma \ref{lemma31}, we arrive at
\begin{align*}
&\sum_{k=0}^{(p-r)/5}(10k+r)\frac{(\frac{r}{5})_k^5}{(1)_k^5}\left(\sum_{j=0}^{k-1}\frac{1}{(r/5+j)^4}-\sum_{j=0}^{k-1}\frac{1}{(1+j)^4}\right)\\
\equiv\ &\sum_{k=0}^{(p-r)/5}(10k+r)\frac{(\frac{r}{5})_k^5}{(1)_k^5}\sum_{j=0}^{(p-r)/5-1}\frac{1}{(r/5+j)^4}\\
\equiv\ &0\pmod{p}.
\end{align*}
\end{proof}
\begin{lemma}\label{lemma33}
Under the assumptions of Theorem \ref{theorem1}, modulo $p^2$, we have
$$\sum_{j=0}^{(p-r)/5-1}\f{1}{(\f{2r}5+j)^2}-\sum_{j=0}^{(p-r)/5-1}\f{1}{(1+j)^2}+2\sum\limits_{1\leq j\leq\<-\f{r}{5}\>_{p^2}\atop p\nmid j}\f{1}{j^2}-\sum\limits_{1\leq j\leq\<\f{r}{10}-\f12\>_{p^2}\atop p\nmid j}\f{1}{j^2}\equiv\sum_{j=0}^{(-r-1)/2}\f{1}{(\f12+\f{r}{10}+j)^2}.$$
\end{lemma}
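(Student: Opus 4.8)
The plan is to evaluate the two sums running up to residues modulo $p^2$ by a two–digit (base-$p$) decomposition, reduce everything to ordinary harmonic sums, and finish with a reflection $s\mapsto p-s$. Throughout I write $n=(p-r)/5$ and $m=(1-r)/2$, so that the right-hand upper limit is $(-r-1)/2=m-1$. I abbreviate the four sums on the left by $A,B,C,D$ in order, write $E$ for the right-hand side, and set $H_k=\sum_{l=1}^{p-1}l^{-k}$.

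First I would record the relevant residues. Since $5n=p-r$ one has $-r/5=n-p/5$, so $\<-\tfrac r5\>_{p^2}=n+p\<-\tfrac15\>_p$; thus in $C$ the ``units digit'' is $u_C=n$ and the ``$p$-digit'' satisfies $v_C\eq-1/5\pmod p$. Similarly $\f r{10}-\f12=\f{r-5}{10}=\f{4p+r-5}{10}-\f{2p}5$, and $u_D:=\f{4p+r-5}{10}$ is an integer in $(0,p)$ (its numerator is divisible by $10$ because $p\eq r\pmod5$), whence $v_D\eq-2/5\pmod p$. The hypotheses $r\ls1$ and $p\gs(5-3r)/2$ give $n\ls u_D\ls 2n<p$, with $u_D-n=(2p+3r-5)/10$ and $2n-u_D=m$.

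Next, for a unit $a$ with $\<a\>_{p^2}=u+pv$ and $0<u<p$, grouping $j=pt+l$ and using $j^{-2}\eq l^{-2}-2pt\,l^{-3}\pmod{p^2}$ gives
$$\sum_{\substack{1\ls j\ls\<a\>_{p^2}\\ p\nmid j}}\f1{j^2}\eq vH_2+\sum_{l=1}^{u}\f1{l^2}-2pv\sum_{l=1}^{u}\f1{l^3}\pmod{p^2},$$
since $H_3\eq0\pmod p$ annihilates the cross terms coming from the full blocks. Applying this to $C$ and $D$ and forming $2C-D$, the coefficient of $H_2$ is $(2v_C-v_D)H_2\eq0$ because $2v_C-v_D\eq0\pmod p$ and $H_2\eq0\pmod p$; this is the first, ``Wolstenholme'', cancellation. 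As $\sum_{l=1}^{u_C}l^{-2}=\sum_{l=1}^{n}l^{-2}=B$, and using $v_C\eq-1/5$, $v_D\eq-2/5$ in the cubic tails, the whole left-hand side collapses to $A-\sum_{l=n+1}^{u_D}l^{-2}-\f{4p}5\sum_{l=n+1}^{u_D}l^{-3}\pmod{p^2}$.

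Finally I would attack $A$ itself. Writing $2r/5\eq u_A+pv_A\pmod{p^2}$ with $u_A=p-2n$ and $v_A\eq-3/5$, expanding as above and then reflecting $s\mapsto p-s$ (which sends $[p-2n,p-n-1]$ to $[n+1,2n]$ and uses $(p-s)^{-3}\eq-s^{-3}\pmod p$) yields $A\eq\sum_{s=n+1}^{2n}s^{-2}+\f{4p}5\sum_{s=n+1}^{2n}s^{-3}\pmod{p^2}$. Subtracting the two short sums and using $2n=u_D+m$ leaves $\sum_{l=u_D+1}^{u_D+m}l^{-2}+\f{4p}5\sum_{l=u_D+1}^{u_D+m}l^{-3}$. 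Because $u_D+1+j=\f12+\f r{10}+j+\f{2p}5$, a last expansion $l^{-2}\eq(\f12+\f r{10}+j)^{-2}-\f{4p/5}{(\f12+\f r{10}+j)^{3}}\pmod{p^2}$ turns the quadratic sum into $E-\f{4p}5\Sigma_3$ with $\Sigma_3=\sum_{j=0}^{m-1}(\f12+\f r{10}+j)^{-3}$, while the cubic sum is $\eq\Sigma_3\pmod p$; the two $\f{4p}5\Sigma_3$ terms cancel and leave exactly $E$. I expect the main obstacle to be precisely this two-tier bookkeeping of the order-$p$ correction terms: one must retain every cubic tail (which, unlike the full sum $H_3$, does \emph{not} vanish modulo $p$) and verify that the surviving $\f{4p}5$-multiples cancel in pairs. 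Dropping them---as the naive reading ``the block sum equals $vH_2+\sum l^{-2}$'' would---already fails numerically (e.g.\ $r=1$, $p=11$), which is the sanity check I would use to confirm the coefficients.
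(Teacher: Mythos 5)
Your argument is correct, and it takes a genuinely different route from the paper's. I checked the key points: the digit values $v_C\equiv-\frac15$, $v_D\equiv-\frac25$, $v_A\equiv-\frac35\pmod p$; the inequalities $n\le u_D\le 2n<p$, where $u_D-n=\frac{2p+3r-5}{10}\ge0$ is exactly where the hypothesis $p\ge(5-3r)/2$ enters; the block formula (the full-block cross terms indeed die since $\sum_{l=1}^{p-1}l^{-3}\equiv0\pmod p$, and $(2v_C-v_D)H_2\equiv0\pmod{p^2}$ because both factors are divisible by $p$); the reflection $s\mapsto p-s$ giving $A\equiv\sum_{l=n+1}^{2n}l^{-2}+\frac{4p}{5}\sum_{l=n+1}^{2n}l^{-3}$; and the final pairing of the two $\frac{4p}{5}\Sigma_3$ terms. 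The paper, by contrast, never expands in base $p$: with $\alpha=\langle-\frac r5\rangle_{p^2}$, $a_0=\frac{p-r}5$, $a_1=(\alpha-a_0)/p$, it works at the $p^2$ scale, rewriting your $A$ as $\sum_{j=2\alpha+1-a_0}^{2\alpha}j^{-2}$ via $\frac{2r}5\equiv-2\alpha\pmod{p^2}$, converting $C-B$ into $\sum_{\alpha+1\le j\le2\alpha-a_0,\,p\nmid j}j^{-2}$ by a shift by $a_1p$ (whose cubic correction vanishes structurally because the shifted range is a union of complete blocks), so that $A-B+2C$ coalesces into $\sum_{1\le j\le2\alpha,\,p\nmid j}j^{-2}$; it then assumes without loss of generality $p\equiv1\pmod5$, computes $\alpha=\frac{p^2-r}5$ and $\langle\frac r{10}-\frac12\rangle_{p^2}=\frac{p^2-1-\alpha}2$, and identifies the leftover block of length $\frac{1-r}2$ with the right-hand side. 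Your version buys uniformity: no case analysis over $p\bmod5$, since your digits come from exact rational identities such as $-\frac r5=n-\frac p5$ and $u_D+1+j=\frac12+\frac r{10}+j+\frac{2p}5$ (the latter, with $0<u_D+1+j<p$, even shows the right-hand denominators are automatically $p$-adic units, a well-posedness point the paper leaves implicit). The cost is that you must carry every order-$p$ cubic tail explicitly and verify that the surviving $\frac{4p}{5}$-multiples cancel in pairs, whereas the paper arranges its shifts so that cubic corrections only ever occur over complete blocks and disappear for free. One cosmetic remark: the integrality of $u_D=\frac{4p+r-5}{10}$ uses that $r$ is odd in addition to $p\equiv r\pmod5$.
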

\begin{proof}Let $\alpha=\<-\frac{r}{5}\>_{p^2}$, $a_0=\<-\frac{r}{5}\>_{p}$ and $a_1=(\alpha-a_0)/p$. Since $p\equiv r\pmod 5$, we have $a_0=\f{p-r}5$.
It is easy to verify that
$$\sum_{j=0}^{(p-r)/5-1}\frac{1}{(\f{2r}{5}+j)^2}\equiv\sum_{j=0}^{(p-r)/5-1}\frac{1}{(-2\alpha+j)^2}=\sum_{j=2\alpha+1-a_0}^{2\alpha}\f1{j^2}\pmod{p^2}$$
and
\begin{align*}
\sum\limits_{1\leq j\leq\<-r/5\>_{p^2}\atop p\nmid j}\f1{j^2}-\sum_{j=1}^{(p-r)/5}\f1{j^2}=\ &\sum\limits_{a_0+1\leq j\leq a_0+a_1p\atop p\nmid j}\f1{j^2}=\sum\limits_{a_0+1+a_1p\leq j\leq a_0+2a_1p\atop p\nmid j}\f1{(j-a_1p)^2}\\
\equiv\ &\sum\limits_{a_0+1+a_1p\leq j\leq a_0+2a_1p\atop p\nmid j}\left(\f1{j^2}+\f{2a_1p}{j^3}\right)\\
\equiv\ &\sum\limits_{a_0+1+a_1p\leq j\leq a_0+2a_1p\atop p\nmid j}\f1{j^2}\\
=\ &\sum\limits_{\alpha+1\leq j\leq 2\alpha-a_0\atop p\nmid j}\f1{j^2}\pmod{p^2}.
\end{align*}
Therefore
$$\sum_{j=0}^{(p-r)/5-1}\frac{1}{(\f{2r}{5}+j)^2}-\sum_{j=0}^{(p-r)/5-1}\frac{1}{(1+j)^2}+2\sum\limits_{1\leq j\leq\<-r/5\>_{p^2}\atop p\nmid j}\frac{1}{j^2}\equiv\sum\limits_{1\leq j\leq2\alpha\atop p\nmid j}\frac{1}{j^2}\pmod{p^2}.$$
Without loss of generality, we may assume that $p\equiv1\pmod5$ since we can handle the other cases $p\equiv2,3,4\pmod5$ in a similar way. Under this circumstance, we have $\alpha=\<-\f{r}{5}\>_{p^2}=\f{p^2-r}5$ and $\<\f{r}{10}-\f12\>_{p^2}=\f{p^2-1-\alpha}{2}.$ Thus we obtain

\begin{align*}
&\sum_{j=0}^{(p-r)/5-1}\f{1}{(\f{2r}5+j)^2}-\sum_{j=0}^{(p-r)/5-1}\f{1}{(1+j)^2}+2\sum\limits_{1\leq j\leq\<-\f{r}{5}\>_{p^2}\atop p\nmid j}\f{1}{j^2}-\sum\limits_{1\leq j\leq\<\f{r}{10}-\f12\>_{p^2}\atop p\nmid j}\f{1}{j^2}\\
\equiv\ &\sum\limits_{1\leq j\leq2\alpha\atop p\nmid j}\frac{1}{j^2}-\sum\limits_{1\leq j\leq(p^2-1-\alpha)/2\atop p\nmid j}\f{1}{j^2}\\
=\ &\sum_{j=0}^{(-r-1)/2}\f{1}{((p^2+1-\alpha)/2+j)^2}\\
\equiv\ &\sum_{j=0}^{(-r-1)/2}\f{1}{(\f12+\f{r}{10}+j)^2}\pmod{p^2}.
\end{align*}
Now the proof of Lemma \ref{lemma43} is complete.
\end{proof}
Next we present another necessary lemma. To prove this lemma, we need to use Whipple's ${}_4F_3$ transformation formula repeatedly.
\begin{lemma}\label{lem44}
Let $r\leq 1$ be an odd integer coprime with $5$. Then
\begin{align*}
&\sum_{k=0}^{(1-r)/2}\f{(\f{r-1}2)_k(\f{r}5)_k^3}{(1)_k(\f{1}{2}+\f{3r}{10})_k(\f{2r}{5})_k^2}\l(\sum_{j=0}^{k-1}\f{1}{(\f{r}{5}+j)^2}+\sum_{j=0}^{k-1}\f{1}{(\f{2r}{5}+j)^2}\r)\\
=\ &\sum_{k=0}^{(1-r)/2}\f{(\f{r-1}2)_k(\f{r}5)_k^3}{(1)_k(\f{1}{2}+\f{3r}{10})_k(\f{2r}{5})_k^2}\sum_{j=0}^{(-r-1)/2}\f{1}{(\f12+\f{r}{10}+j)^2}.
\end{align*}
\end{lemma}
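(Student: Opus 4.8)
Write $n=(1-r)/2\in\N$, so that $\frac{r-1}2=-n$, and set
$$S=\sum_{k=0}^n T_k,\qquad T_k=\frac{(-n)_k(\frac r5)_k^3}{(1)_k(\frac12+\frac{3r}{10})_k(\frac{2r}5)_k^2}.$$
Then $S$ is a balanced (Saalschützian) terminating ${}_4F_3(1)$, since $3\cdot\frac r5-n+1=\frac12+\frac{3r}{10}+2\cdot\frac{2r}5$. Because $(-r-1)/2=n-1$, the right-hand side of the lemma is $C\,S$ with $C=\sum_{j=0}^{n-1}\frac1{(\frac12+\frac r{10}+j)^2}$. Writing $H_m(a):=\sum_{j=0}^{m-1}\frac1{(a+j)^2}$, together with $A:=\sum_{k=0}^n T_kH_k(\frac r5)$ and $B:=\sum_{k=0}^n T_kH_k(\frac{2r}5)$, the claim is precisely $A+B=CS$ with $C=H_n(\frac12+\frac r{10})$. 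The plan is to combine one relation produced by \eqref{4f3} with one produced by reversing the order of summation.

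First I would feed the balanced family perturbed in the repeated upper parameter,
$${}_4F_3\bigg[\begin{matrix}-n,&\frac r5,&\frac r5+x,&\frac r5-x\\ &\frac12+\frac{3r}{10},&\frac{2r}5,&\frac{2r}5\end{matrix}\bigg|\ 1\bigg],$$
into \eqref{4f3} with $a=\frac r5$, $\{b,c\}=\{\frac r5+x,\frac r5-x\}$, $d=\frac12+\frac{3r}{10}$ and $e=f=\frac{2r}5$. The balance condition holds for every $x$; the prefactor is $\frac{(e-a)_n(f-a)_n}{(e)_n(f)_n}=\frac{(\frac r5)_n^2}{(\frac{2r}5)_n^2}=:\rho$, which is free of $x$, and the transformed series has upper row $-n,\frac r5,\frac12+\frac r{10}-x,\frac12+\frac r{10}+x$ and lower row $\frac12+\frac{3r}{10},\frac12+\frac{3r}{10},\frac12+\frac{3r}{10}$; call it $\sum_{k=0}^n V_k$ with $V_k=\frac{(-n)_k(\frac r5)_k(\frac12+\frac r{10})_k^2}{(1)_k(\frac12+\frac{3r}{10})_k^3}$ at $x=0$. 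Using $(\frac r5+x)_k(\frac r5-x)_k=(\frac r5)_k^2(1-x^2H_k(\frac r5)+O(x^4))$ and $(\frac12+\frac r{10}+x)_k(\frac12+\frac r{10}-x)_k=(\frac12+\frac r{10})_k^2(1-x^2H_k(\frac12+\frac r{10})+O(x^4))$, and comparing the coefficients of $x^2$ on the two sides, I obtain the clean relation
$$A=\rho\,A',\qquad A':=\sum_{k=0}^n V_kH_k(\tfrac12+\tfrac r{10}).$$

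Next I would reverse the order of summation. Using $(\alpha)_{n-k}=(-1)^k(\alpha)_n/(1-\alpha-n)_k$, the reflection $(1-a-n)_n=(-1)^n(a)_n$, and the elementary identities $1-\frac r5-n=\frac12+\frac{3r}{10}$ and $1-(\frac12+\frac r{10})-n=\frac{2r}5$, a direct Pochhammer computation gives the term-level reversal $T_{n-k}=\rho\,V_k$, equivalently $V_{n-k}=\rho^{-1}T_k$. Reindexing $k\mapsto n-k$ in $A'$ and invoking $H_{n-k}(\frac12+\frac r{10})=H_n(\frac12+\frac r{10})-H_k(\frac{2r}5)=C-H_k(\frac{2r}5)$ yields
$$A'=\sum_{k=0}^n V_{n-k}H_{n-k}(\tfrac12+\tfrac r{10})=\rho^{-1}\sum_{k=0}^n T_k\big(C-H_k(\tfrac{2r}5)\big)=\rho^{-1}(CS-B),$$
so $\rho A'=CS-B$. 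Together with $A=\rho A'$ from the previous step this gives $A=CS-B$, that is $A+B=CS$, which is the lemma.

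The main obstacle is the transformation step. Reversal of summation on its own is degenerate — applying it twice is the identity, so it yields only tautologies such as $0=(H_n(\frac r5)-H_n(\frac12+\frac{3r}{10}))S$, which vanishes by the hidden symmetry $H_n(\frac r5)=H_n(\frac12+\frac{3r}{10})$ — and the single genuinely new input is the Whipple relation $A=\rho A'$. Obtaining it cleanly requires choosing exactly the perturbation $\frac r5\pm x$ in the threefold numerator slot with the lower parameters left fixed, so that \eqref{4f3} keeps the series balanced for all $x$, keeps the prefactor independent of $x$, and transfers the second-order sum $H_k(\frac r5)$ to the transformed weights $V_k$ without generating any first-order ($H^{(1)}$) contributions. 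The remaining work is careful bookkeeping: verifying that the reflected argument $1-(\frac12+\frac r{10})-n$ is exactly $\frac{2r}5$, so that the reversed weight of $A'$ becomes the weight defining $B$, and that the boundary constant is exactly $C=H_n(\frac12+\frac r{10})$, which is precisely what makes the two relations close up to $A+B=CS$.
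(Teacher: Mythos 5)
Your proof is correct; I verified the two computations everything hinges on. In your notation ($n=(1-r)/2$, $H_m(a)=\sum_{j=0}^{m-1}(a+j)^{-2}$, $A=\sum_k T_kH_k(\f r5)$, $B=\sum_k T_kH_k(\f{2r}5)$, $C=H_n(\f12+\f r{10})$, $\rho=(\f r5)_n^2/(\f{2r}5)_n^2$): with $b,c=\f r5\pm x$ and $e=f=\f{2r}5$ the family stays balanced for every $x$, the prefactor in \eqref{4f3} is indeed the $x$-free $\rho$, the transformed lower row consists of three copies of $\f12+\f{3r}{10}$, and all perturbed Pochhammer products are even in $x$, so comparing coefficients of $x^2$ legitimately yields $A=\rho A'$ (both sides of \eqref{4f3} under your substitution are polynomials in $x^2$, since the sums terminate, the lower parameters are $x$-free, and for $r$ odd and coprime to $5$ none of them is a non-positive integer); moreover $T_{n-k}=\rho V_k$ follows from $(\al)_{n-k}=(-1)^k(\al)_n/(1-\al-n)_k$ together with the reflections $1-\f r5-n=\f12+\f{3r}{10}$, $1-\f12-\f r{10}-n=\f{2r}5$, $1-\f{2r}5-n=\f12+\f r{10}$, and $H_{n-k}(\f12+\f r{10})=C-H_k(\f{2r}5)$ is the usual negate-and-reverse rearrangement of the tail, so $\rho A'=CS-B$ and the lemma follows. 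This uses the same toolkit as the paper --- an $x$-perturbed instance of Whipple's transformation \eqref{4f3} read off at order $x^2$, combined with reversal of summation --- but your deployment is genuinely different and leaner. The paper perturbs all four free slots, taking $b,c=\f r5\mp ix$ and $e,f=\f{2r}5\mp x$, so that the left-hand $x^2$ coefficient produces $A+B$ in one stroke; the price is an $x$-dependent prefactor $(\f r5-x)_n(\f r5+x)_n/((\f{2r}5-x)_n(\f{2r}5+x)_n)$, which injects the boundary sums $H_n(\f r5)$ and $H_n(\f{2r}5)$ into \eqref{a3}. The paper must then invoke the unperturbed case \eqref{a2} and two reflection identities among the partial sums to reach \eqref{a4}, and it concludes via the reversal computation \eqref{a5}, which shows the two sides of \eqref{a4} are negatives of one another and hence both vanish. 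Your two-slot perturbation trades the instant appearance of $A+B$ for a constant prefactor, which eliminates the imaginary shift $ix$ and the auxiliary step \eqref{a2} entirely and delivers the identity in the direct form $A+B=CS$ rather than as the vanishing of an antisymmetric combination; the paper's variant, on the other hand, needs only one application of the reversal trick to the transformed sum instead of your term-level reversal plus tail rearrangement. Your closing remark is also accurate: reversal alone is involutive and only reproduces tautologies (such as $0=(H_n(\f r5)-H_n(\f12+\f{3r}{10}))S$, which indeed vanishes since those two squared-value sets coincide), so the single genuinely new input in both proofs is the Whipple relation, and your choice of perturbation is exactly the one that extracts it with no spurious boundary terms.
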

\begin{proof}Noting that
$$\sum_{j=0}^{(-r-1)/2}\f{1}{(\f12+\f{r}{10}+j)^2}=\sum_{j=0}^{(-r-1)/2}\f{1}{(\f{2r}{5}+j)^2}.$$
Therefore it suffices to prove that
$$\sum_{k=0}^{(1-r)/2}\f{(\f{r-1}2)_k(\f{r}5)_k^3}{(1)_k(\f{1}{2}+\f{3r}{10})_k(\f{2r}{5})_k^2}\l(\sum_{j=0}^{k-1}\f{1}{(\f{r}{5}+j)^2}-\sum_{j=k}^{(-r-1)/2}\f{1}{(\f{2r}{5}+j)^2}\r)=0.$$
Putting $n=\f{1-r}{2}$, $a=\f{r}{5}$, $b=\f{r}{5}-ix$, $c=\f{r}{5}+ix$, $d=\f12+\f{3r}{10}$, $e=\f{2r}{5}-x$, $f=\f{2r}5+x$ in \eqref{4f3}, where $x\in\mathbb{C}$ is a variable, we find that
\begin{align}\label{a1}
&{}_{4}F_3\bigg[\begin{matrix}\f{r-1}{2},&\f{r}5,&\f{r}{5}-ix,&\f{r}{5}+ix\\ &\f12+\f{3r}{10},&\f{2r}5-x,&\f{2r}5+x\end{matrix}\bigg|\ 1\bigg]\notag\\
=\ &\frac{(\f{r}5-x)_n(\f{r}5+x)_n}{(\f{2r}5-x)_n(\f{2r}5+x)_n}\cdot{}_{4}F_3\bigg[\begin{matrix}\f{r-1}{2},&\f{r}{5},&\f12+\f{r}{10}+ix,&\f12+\f{r}{10}-ix\\ &\f12+\f{3r}{10},&\f12+\f{3r}{10}+x,&\f12+\f{3r}{10}-x\end{matrix}\bigg|\ 1\bigg].
\end{align}
Setting $x=0$ in the above equation, we get
\begin{align}\label{a2}
&{}_{4}F_3\bigg[\begin{matrix}\f{r-1}{2},&\f{r}5,&\f{r}{5},&\f{r}{5}\\ &\f12+\f{3r}{10},&\f{2r}5,&\f{2r}5\end{matrix}\bigg|\ 1\bigg]\notag\\
=\ &\frac{(\f{r}5)_n(\f{r}5)_n}{(\f{2r}5)_n(\f{2r}5)_n}\cdot{}_{4}F_3\bigg[\begin{matrix}\f{r-1}{2},&\f{r}{5},&\f12+\f{r}{10},&\f12+\f{r}{10}\\ &\f12+\f{3r}{10},&\f12+\f{3r}{10},&\f12+\f{3r}{10}\end{matrix}\bigg|\ 1\bigg].
\end{align}
Comparing the coefficients of $x^2$ on both sides of \eqref{a1}, we obtain that
\begin{align}\label{a3}
&\sum_{k=0}^{(1-r)/2}\f{(\f{r-1}2)_k(\f{r}5)_k^3}{(1)_k(\f{1}{2}+\f{3r}{10})_k(\f{2r}{5})_k^2}\l(\sum_{j=0}^{k-1}\f{1}{(\f{r}{5}+j)^2}+\sum_{j=0}^{k-1}\f{1}{(\f{2r}{5}+j)^2}\r)\notag\\
=\ &\f{(\f{r}{5})_{\f{1-r}{2}}^2}{(\f{2r}{5})_{\f{1-r}{2}}^2}\sum_{k=0}^{(1-r)/2}\f{(\f{r-1}2)_k(\f{r}5)_k(\f12+\f{r}{10})_k^2}{(1)_k(\f{1}{2}+\f{3r}{10})_k^3}\bigg(\sum_{j=0}^{k-1}\f{1}{(\f12+\f{r}{10}+j)^2}+\sum_{j=0}^{k-1}\f{1}{(\f12+\f{3r}{10}+j)^2}\notag\\
&-\sum_{j=0}^{(-r-1)/2}\f{1}{(\f{r}{5}+j)^2}+\sum_{j=0}^{(-r-1)/2}\f{1}{(\f{2r}{5}+j)^2}\bigg).
\end{align}
Noting that
\begin{equation*}
\sum_{j=0}^{(-r-1)/2}\f{1}{(\f12+\f{3r}{10}+j)^2}=\sum_{j=0}^{(-r-1)/2}\f{1}{(\f{r}{5}+j)^2},
\end{equation*}
and applying \eqref{a2}, \eqref{a3} becomes
\begin{align}\label{a4}
&\sum_{k=0}^{(1-r)/2}\f{(\f{r-1}2)_k(\f{r}5)_k^3}{(1)_k(\f{1}{2}+\f{3r}{10})_k(\f{2r}{5})_k^2}\l(\sum_{j=0}^{k-1}\f{1}{(\f{r}{5}+j)^2}-\sum_{j=k}^{(-r-1)/2}\f{1}{(\f{2r}{5}+j)^2}\r)\notag\\
=\ &\f{(\f{r}{5})_{\f{1-r}{2}}^2}{(\f{2r}{5})_{\f{1-r}{2}}^2}\sum_{k=0}^{(1-r)/2}\f{(\f{r-1}2)_k(\f{r}5)_k(\f12+\f{r}{10})_k^2}{(1)_k(\f{1}{2}+\f{3r}{10})_k^3}\l(\sum_{j=0}^{k-1}\f{1}{(\f12+\f{r}{10}+j)^2}-\sum_{j=k}^{(-r-1)/2}\f{1}{(\f12+\f{3r}{10}+j)^2}\r).
\end{align}
On the other hand,
\begin{align}\label{a5}
&\f{(\f{r}{5})_{\f{1-r}{2}}^2}{(\f{2r}{5})_{\f{1-r}{2}}^2}\sum_{k=0}^{(1-r)/2}\f{(\f{r-1}2)_k(\f{r}5)_k(\f12+\f{r}{10})_k^2}{(1)_k(\f{1}{2}+\f{3r}{10})_k^3}\l(\sum_{j=0}^{k-1}\f{1}{(\f12+\f{r}{10}+j)^2}-\sum_{j=k}^{(-r-1)/2}\f{1}{(\f12+\f{3r}{10}+j)^2}\r)\notag\\
=\ &\f{(\f{r}{5})_{\f{1-r}{2}}^2}{(\f{2r}{5})_{\f{1-r}{2}}^2}\sum_{k=0}^{(1-r)/2}\f{(\f{r-1}2)_{\f{1-r}{2}-k}(\f{r}5)_{\f{1-r}{2}-k}(\f12+\f{r}{10})_{\f{1-r}{2}-k}^2}{(1)_{\f{1-r}{2}-k}(\f{1}{2}+\f{3r}{10})_{\f{1-r}{2}-k}^3}\bigg(\sum_{j=0}^{(-r-1)/2-k}\f{1}{(\f12+\f{r}{10}+j)^2}\notag\\
&-\sum_{j=(1-r)/2-k}^{(-r-1)/2}\f{1}{(\f12+\f{3r}{10}+j)^2}\bigg)\notag\\
=\ &\f{(\f{r-1}{2})_{\f{1-r}{2}}(\f12+\f{r}{10})_{\f{1-r}{2}}^2(\f{r}{5})_{\f{1-r}{2}}^3}{(1)_{\f{1-r}{2}}(\f{2r}{5})_{\f{1-r}{2}}^2(\f12+\f{3r}{10})_{\f{1-r}{2}}^3}\sum_{k=0}^{(1-r)/2}\f{(\f{r-1}2)_k(\f{r}5)_k^3}{(1)_k(\f{1}{2}+\f{3r}{10})_k(\f{2r}{5})_k^2}\l(\sum_{j=k}^{(-r-1)/2}\f{1}{(\f{2r}{5}+j)^2}-\sum_{j=0}^{k-1}\f{1}{(\f{r}{5}+j)^2}\r)\notag\\
=\ &\sum_{k=0}^{(1-r)/2}\f{(\f{r-1}2)_k(\f{r}5)_k^3}{(1)_k(\f{1}{2}+\f{3r}{10})_k(\f{2r}{5})_k^2}\l(\sum_{j=k}^{(-r-1)/2}\f{1}{(\f{2r}{5}+j)^2}-\sum_{j=0}^{k-1}\f{1}{(\f{r}{5}+j)^2}\r),
\end{align}
where in the last step we have used the facts that $(\f{r-1}{2})_{\f{1-r}{2}}=(-1)^{(1-r)/2}(1)_{\f{1-r}{2}}$, $(\f12+\f{r}{10})_{\f{1-r}{2}}=(-1)^{(1-r)/2}(\f{2r}{5})_{\f{1-r}{2}}$ and $(\f{r}{5})_{\f{1-r}{2}}=(-1)^{(1-r)/2}(\f12+\f{3r}{10})_{\f{1-r}{2}}$. Combining \eqref{a4} and \eqref{a5}, we obtain
$$\sum_{k=0}^{(1-r)/2}\f{(\f{r-1}2)_k(\f{r}5)_k^3}{(1)_k(\f{1}{2}+\f{3r}{10})_k(\f{2r}{5})_k^2}\l(\sum_{j=0}^{k-1}\f{1}{(\f{r}{5}+j)^2}-\sum_{j=k}^{(-r-1)/2}\f{1}{(\f{2r}{5}+j)^2}\r)=0.$$
Now the proof of the desired result is complete.
\end{proof}

\medskip

\noindent{\it Proof of Theorem \ref{theorem1}}. Putting $m=\frac{1-r}{2}$, $t=\f{r}{5}$, $n=\frac{p-r}{5}$, $a=\f{r-5}{10}$, $b=-\f{p}{5}$ and $c=-\f{ip}{5}$ in \eqref{new}, we have
\begin{align}\label{new3}
&{}_7F_6\bigg[\begin{matrix}\frac{r}{5},&1+\frac{r}{10},&\frac{r-p}{5},&\frac{r+5}{10},&\frac{r+p}{5},&\frac{r+ip}{5},&\frac{r-ip}{3}\\ &\f r{10},&1+\f p5,&\f{r+5}{10},&1-\f p5,&1-\f{ip}{5},&1+\f{ip}{5}\end{matrix}\bigg|\ 1\bigg]\notag\\
=\ &{}_4F_3\bigg[\begin{matrix}\f{r-1}{2},&\f{r-p}{5},&\f{r+p}{5},&\f{r}{5}\\ &\f{2r-ip}{5},&\f{1}{2}+\f{3r}{10},&\f{2r+ip}{5}\end{matrix}\bigg|\ 1\bigg]\notag\\
&\times\f{(1+\f{r}{5})_n(\f12-\f{r}{10}-\f{p}5)_n(\f{2r}{5}-\f{ip}5)_n(1-\f{r}5-\f{(1+i)p}{5})_n}{(\f12+\f{r}{10})_n(1-\frac{p}{5})_n(1-\f{ip}{5})_n(\f{r}{5}-\f{(1+i)p}{5})_n}.
\end{align}
With the help of Lemmas \ref{lemw} and \ref{lemma32}, the left-hand side of \eqref{new3} becomes
\begin{align}\label{t1}
&{}_7F_6\bigg[\begin{matrix}\frac{r}{5},&1+\frac{r}{10},&\frac{r-p}{5},&\frac{r+5}{10},&\frac{r+p}{5},&\frac{r+ip}{5},&\frac{r-ip}{3}\\ &\f r{10},&1+\f p5,&\f{r+5}{10},&1-\f p5,&1-\f{ip}{5},&1+\f{ip}{5}\end{matrix}\bigg|\ 1\bigg]_{\f{p-r}{5}}\notag\\
\equiv\ &\f1r\sum_{k=0}^{(p-r)/5}(10k+r)\f{(\f r5)_k^5}{(1)_k^5}\l(1+\f{1}{625}p^4\sum_{j=0}^{k-1}\f{1}{(r/5+j)^4}-\f{1}{625}p^4\sum_{j=0}^{k-1}\f1{(1+j)^4}\r)\notag\\
\eq\ &\f1r\sum_{k=0}^{(p-r)/5}(10k+r)\f{(\f r5)_k^5}{(1)_k^5}\notag\\
\eq\ &\f1r\sum_{k=0}^{p-1}(10k+r)\f{(\f r5)_k^5}{(1)_k^5}\pmod{p^5},
\end{align}
where in the last step we have used the fact that $(\f{r}5)_k\equiv0\pmod p$ for $k\in\{(p-r)/5+1,\ldots,p-1\}$.

Now we consider the right-hand side of \eqref{new3} modulo $p^5$. Firstly, by Lemma \ref{lem44}, we have
\begin{align}\label{s1}
&{}_4F_3\bigg[\begin{matrix}\f{r-1}{2},&\f{r-p}{5},&\f{r+p}{5},&\f{r}{5}\\ &\f{2r-ip}{5},&\f{1}{2}+\f{3r}{10},&\f{2r+ip}{5}\end{matrix}\bigg|\ 1\bigg]\notag\\
=\ &{}_4F_3\bigg[\begin{matrix}\f{r-1}{2},&\f{r-p}{5},&\f{r+p}{5},&\f{r}{5}\\ &\f{2r-ip}{5},&\f{1}{2}+\f{3r}{10},&\f{2r+ip}{5}\end{matrix}\bigg|\ 1\bigg]_{\f{1-r}{2}}\notag\\
\equiv\ &\sum_{k=0}^{(1-r)/2}\f{(\f{r-1}2)_k(\f{r}5)_k^3}{(1)_k(\f{1}{2}+\f{3r}{10})_k(\f{2r}{5})_k^2}\l(1-\f{p^2}{25}\sum_{j=0}^{k-1}\f{1}{(\f{r}{5}+j)^2}-\f{p^2}{25}\sum_{j=0}^{k-1}\f{1}{(\f{2r}{5}+j)^2}\r)\notag\\
=\ &\sum_{k=0}^{(1-r)/2}\f{(\f{r-1}2)_k(\f{r}5)_k^3}{(1)_k(\f{1}{2}+\f{3r}{10})_k(\f{2r}{5})_k^2}\l(1-\f{p^2}{25}\sum_{j=0}^{(-r-1)/2}\f{1}{(\f12+\f{r}{10}+j)^2}\r)\pmod{p^4}.
\end{align}
Note that
\begin{align*}
\l(1+\f{r}{5}\r)_n=\ &(-1)^n\f{p}{r}\l(1-\f{p}{5}\r)_n,\\
\l(\f12-\f{r}{10}-\f{p}{5}\r)_n=\ &(-1)^n\l(\f12+\f{3r}{10}\r)_n,\\
\l(1-\f{r}5-\f{(1+i)p}{5}\r)_n=\ &(-1)^n\l(\f{2r}{5}+\f{ip}5\r)_n,\\
\l(\f{r}{5}-\f{(1+i)p}{5}\r)_n=\ &(-1)^n\l(1+\f{ip}{5}\r)_n.
\end{align*}
Then we have
\begin{align}\label{s2}
&\f{(1+\f{r}{5})_n(\f12-\f{r}{10}-\f{p}5)_n(\f{2r}{5}-\f{ip}5)_n(1-\f{r}5-\f{(1+i)p}{5})_n}{(\f12+\f{r}{10})_n(1-\frac{p}{5})_n(1-\f{ip}{5})_n(\f{r}{5}-\f{(1+i)p}{5})_n}=\f{p}{r}\cdot\f{(\f12+\f{3r}{10})_n(\f{2r}{5}-\f{ip}5)_n(\f{2r}{5}+\f{ip}5)_n}{(\f12+\f{r}{10})_n(1-\f{ip}{5})_n(1+\f{ip}{5})_n}\notag\\
\equiv\ &\f{p}{r}\cdot\f{(\f12+\f{3r}{10})_n(\f{2r}{5})_n^2}{(\f12+\f{r}{10})_n(1)_n^2}\l(1+\f{p^2}{25}\sum_{j=0}^{(p-r)/5-1}\f{1}{(\f{2r}5+j)^2}-\f{p^2}{25}\sum_{j=0}^{(p-r)/5-1}\f{1}{(1+j)^2}\r)\pmod{p^5}.
\end{align}
Furthermore, in light of Lemmas \ref{robert} and \ref{g},
\begin{align}\label{s3}
&\f{(\f12+\f{3r}{10})_n(\f{2r}{5})_n^2}{(\f12+\f{r}{10})_n(1)_n^2}=\f{\Gamma_p(\f12+\f{r}{10})\Gamma_p(\f12+\f{r}{10}+\f{p}5)\Gamma_p(\f{r}{5}+\f{p}{5})^2}{\Gamma_p(\f12+\f{3r}{10})\Gamma_p(\f12-\f{r}{10}+\f{p}5)\Gamma_p(1-\f{r}5+\f{p}5)^2\Gamma_p(\f{2r}5)^2}\notag\\
=\ &\f{\Gamma_p(\f{r}{5}+\f{p}{5})^2\Gamma_p(\f{r}5-\f{p}5)^2}{\Gamma_p(\f{2r}5)^2\Gamma_p(\f12+\f{3r}{10})\Gamma_p(\f12-\f{r}{10})\Gamma_p(\f12-\f{r}{10}-\f{p}5)\Gamma_p(\f12-\f{r}{10}+\f{p}5)}\notag\\
\equiv\ &\f{\Gamma_p(\f{r}{5})^4}{\Gamma_p(\f{2r}5)^2\Gamma_p(\f12+\f{3r}{10})\Gamma_p(\f12-\f{r}{10})^3}\l(1+\f{2p^2}{25}\sum\limits_{1\leq j\leq\<-\f{r}{5}\>_{p^2}\atop p\nmid j}\f{1}{j^2}-\f{p^2}{25}\sum\limits_{1\leq j\leq\<\f{r}{10}-\f12\>_{p^2}\atop p\nmid j}\f{1}{j^2}\r)\pmod{p^4}.
\end{align}
Substituting \eqref{s1}--\eqref{s3} into the right-hand side of \eqref{new3} and applying Lemma \ref{lemma33}, we have
\begin{align}\label{t2}
&{}_4F_3\bigg[\begin{matrix}\f{r-1}{2},&\f{r-p}{5},&\f{r+p}{5},&\f{r}{5}\\ &\f{2r-ip}{5},&\f{1}{2}+\f{3r}{10},&\f{2r+ip}{5}\end{matrix}\bigg|\ 1\bigg]\times\f{(1+\f{r}{5})_n(\f12-\f{r}{10}-\f{p}5)_n(\f{2r}{5}-\f{ip}5)_n(1-\f{r}5-\f{(1+i)p}{5})_n}{(\f12+\f{r}{10})_n(1-\frac{p}{5})_n(1-\f{ip}{5})_n(\f{r}{5}-\f{(1+i)p}{5})_n}\notag\\
\equiv\ &\f{p\Gamma_p(\f{r}{5})^4}{r\Gamma_p(\f{2r}5)^2\Gamma_p(\f12+\f{3r}{10})\Gamma_p(\f12-\f{r}{10})^3}\sum_{k=0}^{(1-r)/2}\f{(\f{r-1}2)_k(\f{r}5)_k^3}{(1)_k(\f{1}{2}+\f{3r}{10})_k(\f{2r}{5})_k^2}\bigg(1-\f{p^2}{25}\sum_{j=0}^{(-r-1)/2}\f{1}{(\f12+\f{r}{10}+j)^2}\notag\\
&+\f{p^2}{25}\sum_{j=0}^{(p-r)/5-1}\f{1}{(\f{2r}5+j)^2}-\f{p^2}{25}\sum_{j=0}^{(p-r)/5-1}\f{1}{(1+j)^2}+\f{2p^2}{25}\sum\limits_{1\leq j\leq\<-\f{r}{5}\>_{p^2}\atop p\nmid j}\f{1}{j^2}-\f{p^2}{25}\sum\limits_{1\leq j\leq\<\f{r}{10}-\f12\>_{p^2}\atop p\nmid j}\f{1}{j^2}\bigg)\notag\\
\equiv\ &\f{p\Gamma_p(\f{r}{5})^4}{r\Gamma_p(\f{2r}5)^2\Gamma_p(\f12+\f{3r}{10})\Gamma_p(\f12-\f{r}{10})^3}\sum_{k=0}^{(1-r)/2}\f{(\f{r-1}2)_k(\f{r}5)_k^3}{(1)_k(\f{1}{2}+\f{3r}{10})_k(\f{2r}{5})_k^2}\pmod{p^5}.
\end{align}
Then the desired result follows from \eqref{new3}, \eqref{t1} and \eqref{t2}. So far we have completed the proof of Theorem \ref{theorem1}. \qed

\section{Proof of Theorem \ref{theorem2}}

Similarly to Lemmas \ref{lemma31}, \ref{lemma32} and \ref{lemma33}, we can deduce the following three lemmas, so we omit the proofs.
\begin{lemma}\label{lemma41}
Under the assumptions of congruence \eqref{GLSconj2eq}, we have
$$\sum_{k=0}^{(p-r)/3}(6k+r)\frac{(\frac{r}{3})_k^6}{(1)_k^6}\equiv0\pmod{p}.$$
\end{lemma}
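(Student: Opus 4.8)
The plan is to follow the proof of Lemma~\ref{lemma31} in spirit, feeding the transformation formula \eqref{new} a parameter choice tailored to the six-fold product $\frac{(r/3)_k^6}{(1)_k^6}$ rather than the five-fold product of Section~3. The new feature is that here all seven upper entries of the $_7F_6$ must survive (no reduction to a $_6F_5$ is available), so I would arrange the parameters so that four of the upper entries form the pattern $u\pm vp,\ u\pm vip$ with $u=\frac r3$ demanded by Wang's Lemma~\ref{lemw}, while the remaining two upper entries equal $\frac r3$ exactly. Concretely, I would set $m=1-r$, $t=\frac r3$, $n=\frac{p-r}{3}$, $a=\frac{ip}{3}$, $b=-\frac p3$ and $c=-\frac{ip}{3}$ in \eqref{new}.

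With this choice the upper parameters of the $_7F_6$ (apart from $1+\frac12 t$) become $\frac r3,\ \frac{r-p}{3},\ \frac r3-\frac{ip}{3},\ \frac r3+\frac p3,\ \frac r3+\frac{ip}{3}$ and $\frac r3$, while its lower parameters (apart from $\frac12 t=\frac r6$) become $1+\frac p3,\ 1+\frac{ip}{3},\ 1-\frac p3,\ 1-\frac{ip}{3}$ and $1$. The factor $(\frac{r-p}{3})_k=(-n)_k$ terminates the series at $k=\frac{p-r}{3}$, and the very-well-poised quotient $(1+\frac12 t)_k/(\frac12 t)_k=\frac{6k+r}{r}$ supplies the linear factor. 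Reducing the shifted parameters modulo $p$ — the four genuinely shifted upper (resp.\ lower) entries collapse to $\frac r3$ (resp.\ $1$) by Lemma~\ref{lemw}, and the two exact entries already equal $\frac r3$ (resp.\ $1$) — I would find that the left-hand side of \eqref{new} is congruent modulo $p$ to
$$\frac1r\sum_{k=0}^{(p-r)/3}(6k+r)\frac{(\frac r3)_k^6}{(1)_k^6}.$$

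On the right-hand side the decisive point is the prefactor $(1+t)_n=(1+\frac r3)_{(p-r)/3}$, whose last factor equals $\frac r3+n=\frac p3\equiv0\pmod p$; hence $(1+t)_n\equiv0\pmod p$. It then suffices to confirm that this single zero is not cancelled: the denominator $(1+a)_n(1+b)_n(1+c)_n(a+b+c+1-m-2t)_n$ reduces modulo $p$ to $(-1)^n(n!)^4$ with $n=\frac{p-r}{3}$, a $p$-adic unit; the remaining numerator Pochhammer symbols $(a+b+1-t)_n$, $(a+c+1-m-t)_n$, $(b+c+1-t)_n$ are $p$-integral; and the accompanying $_4F_3$, whose lower parameters all reduce to $\frac{2r}{3}$ and whose upper parameters reduce to $r-1,\frac r3,\frac r3,\frac r3$, is a terminating $p$-integral sum. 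Consequently the right-hand side of \eqref{new} is $\equiv0\pmod p$, and combining this with the foregoing congruence forces $\sum_{k=0}^{(p-r)/3}(6k+r)\frac{(r/3)_k^6}{(1)_k^6}\equiv0\pmod p$.

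The only genuine labor, and the step I expect to be the main obstacle, is the $p$-integrality bookkeeping just invoked. For the $_7F_6$ the lower entries $1\pm\frac p3$, $1\pm\frac{ip}{3}$, $1$ and the factorial all reduce to $(1)_k$, a unit whenever $k<p$, while $\frac r6$ is harmlessly cancelled against $1+\frac r6$; for the $_4F_3$ one must check that over $0\le k\le 1-r$ none of the lower parameters $\frac{2r}{3}$, $\frac{2r}{3}\pm\frac{ip}{3}$ contributes a factor divisible by $p$, i.e.\ that $2r+3j\not\equiv0\pmod p$ for $0\le j\le -r$. This is precisely where the hypotheses $p\equiv r\pmod3$, $p\ge3-2r$ and $p\ge7$ are used: they force $\frac{p-r}{3}\le\frac{p-1}{2}<p$ and $1-r\le\frac{p-1}{2}<p$, and (since $\gcd(r,3)=1$) keep every $2r+3j$ nonzero with $|2r+3j|<p$, so that the lone vanishing factor really is the one inside $(1+t)_n$. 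Given these constraints the argument is routine and parallel to that of Lemma~\ref{lemma31}, which is why it may be omitted.
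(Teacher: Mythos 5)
Your proposal is correct and takes essentially the same route as the paper: the paper omits the proof of Lemma \ref{lemma41} as analogous to Lemma \ref{lemma31}, and its proof of Theorem \ref{theorem2} specializes \eqref{new} with $m=1-r$, $t=\frac{r}{3}$, $n=\frac{p-r}{3}$, $(a,b,c)=(0,-\frac{p}{3},\frac{ip}{3})$, while your choice $(a,b,c)=(\frac{ip}{3},-\frac{p}{3},-\frac{ip}{3})$ yields the identical ${}_7F_6$ and identical ${}_4F_3$, differing only in a cosmetic rearrangement of the prefactor. Your key steps --- the single factor $\frac{p}{3}$ inside $(1+\frac{r}{3})_n$, the unit denominator $(-1)^n(n!)^4$, and the $p$-integrality of the terminating ${}_4F_3$ secured by $p\geq 3-2r$ and $\gcd(r,3)=1$ --- are exactly the paper's Lemma \ref{lemma31} argument transplanted, and they are all verified correctly.
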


\begin{lemma}\label{lemma42}
Under the assumptions of congruence \eqref{GLSconj2eq}, we have
$$\sum_{k=0}^{(p-r)/3}(6k+r)\frac{(\frac{r}{3})_k^6}{(1)_k^6}\left(\sum_{j=0}^{k-1}\frac{1}{(r/3+j)^4}-\sum_{j=0}^{k-1}\frac{1}{(1+j)^4}\right)\equiv0\pmod p.$$
\end{lemma}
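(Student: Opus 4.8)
The plan is to mirror the reflection argument of Lemma~\ref{lemma32}, replacing the roles of $5$, $10$ and the fifth power there by $3$, $6$ and the sixth power here; throughout I write $N=(p-r)/3$. The only summand that resists a direct estimate is the one weighted by $\sum_{j=0}^{k-1}1/(1+j)^4$, and for it I would substitute $k\mapsto N-k$ and exploit the hidden symmetry of the summand under this reflection.

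First I would record the Pochhammer reflection identity
$$(a)_{N-k}=\f{(a)_N}{(-1)^k(1-a-N)_k},$$
applied with $a=r/3$ (so $1-a-N=1-p/3$) and with $a=1$ (so $1-a-N=r/3-p/3$). Raising to the sixth power, the factors $(-1)^{6k}=1$ disappear and one gets
$$\f{(\f r3)_{N-k}^6}{(1)_{N-k}^6}=\f{(\f r3)_N^6}{(1)_N^6}\cdot\f{(\f r3-\f p3)_k^6}{(1-\f p3)_k^6}\eq\f{(\f r3)_k^6}{(1)_k^6}\pmod p,$$
using $(\f r3-\f p3)_k\eq(\f r3)_k$ and $(1-\f p3)_k\eq(1)_k\pmod p$ together with $(\f r3)_N^6\eq(1)_N^6\pmod p$. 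I want to stress the single genuine difference from Lemma~\ref{lemma32}. There the exponent was odd, so the sign $(-1)^N$ coming from $(\f r3)_N=(-1)^N(1-\f p3)_N$ survived, and one had to invoke the parity of $(p-r)/5$, which was guaranteed by $p,r$ both being odd. Here $r$ need not be odd, so no such parity is available; but the sixth power is even, whence $(\f r3)_N^6=(1-\f p3)_N^6\eq(1)_N^6\pmod p$ holds unconditionally and the argument goes through without any parity hypothesis.

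Next I would observe that under $k\mapsto N-k$ the linear weight transforms as $6(N-k)+r=2p-(6k+r)\eq-(6k+r)\pmod p$, while the inner sum reindexes as $\sum_{j=0}^{N-k-1}1/(1+j)^4=\sum_{j=k}^{N-1}1/(N-j)^4\eq\sum_{j=k}^{N-1}1/(\f r3+j)^4\pmod p$, the fourth power absorbing the sign in $N-j\eq-(r/3+j)$. Assembling these three reflections yields
$$\sum_{k=0}^{N}(6k+r)\f{(\f r3)_k^6}{(1)_k^6}\sum_{j=0}^{k-1}\f1{(1+j)^4}\eq-\sum_{k=0}^{N}(6k+r)\f{(\f r3)_k^6}{(1)_k^6}\sum_{j=k}^{N-1}\f1{(\f r3+j)^4}\pmod p.$$
Substituting this into the statement, the crucial minus sign converts the subtraction of the two harmonic sums into the single full sum $\sum_{j=0}^{N-1}1/(\f r3+j)^4$, which is independent of $k$ and may be pulled outside. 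The remaining factor $\sum_{k=0}^{N}(6k+r)(\f r3)_k^6/(1)_k^6$ vanishes modulo $p$ by Lemma~\ref{lemma41}, which finishes the proof.

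I expect no conceptual obstacle here, since the lemma is designed to parallel Lemma~\ref{lemma32}; the only delicate point is the bookkeeping in the reflection step, namely keeping the shifts $-p/3$ inside the Pochhammer symbols and the sign in $2p-(6k+r)\eq-(6k+r)$ perfectly consistent. Once that symmetry is set up correctly, Lemma~\ref{lemma41} does the real work.
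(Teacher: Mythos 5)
Your proposal is correct and is essentially the paper's own argument: the authors omit the proof of Lemma \ref{lemma42} precisely because it follows the proof of Lemma \ref{lemma32} verbatim, namely the reflection $k\mapsto (p-r)/3-k$ applied to the sum weighted by $\sum_{j=0}^{k-1}1/(1+j)^4$, the reversal of the inner harmonic sum with the fourth power absorbing the sign, and a final appeal to Lemma \ref{lemma41}, which is exactly what you carry out. Your one substantive addition---that the parity step of Lemma \ref{lemma32} (where $p,r$ odd forces $(p-r)/5$ even) is not needed here since the even exponent gives $(\f{r}{3})_{(p-r)/3}^6=(1-\f{p}{3})_{(p-r)/3}^6\equiv(1)_{(p-r)/3}^6\pmod{p}$ unconditionally---is accurate and correctly pinpoints the only place where the adaptation differs.
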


\begin{lemma}\label{lemma43}
Under the assumptions of congruence \eqref{GLSconj2eq}, we have
$$\sum_{j=0}^{(p-r)/3-1}\frac{1}{(\f{2r}{3}+j)^2}-\sum_{j=0}^{(p-r)/3-1}\frac{1}{(1+j)^2}+3\sum\limits_{1\leq j\leq\<-r/3\>_{p^2}\atop p\nmid j}\frac{1}{j^2}\equiv\sum_{j=0}^{-r}\frac{1}{(\f{r}3+j)^2}\pmod{p^2}.$$
\end{lemma}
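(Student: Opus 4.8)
The plan is to follow the template of Lemma \ref{lemma33}, reducing each of the three sums on the left-hand side of Lemma \ref{lemma43} to a sum of $1/j^2$ over a range of integers modulo $p^2$, and then to invoke a reflection identity that is special to the modulus-$3$ setting. Set $\alpha=\<-\f r3\>_{p^2}$, $a_0=\<-\f r3\>_p$ and $a_1=(\alpha-a_0)/p$; since $p\eq r\pmod 3$ one checks that $a_0=\f{p-r}3$. The first two reductions are exactly as in Lemma \ref{lemma33}. Because $\f{2r}3\eq-2\alpha\pmod{p^2}$, the substitution $j\mapsto 2\alpha-j$ gives
$$\sum_{j=0}^{(p-r)/3-1}\f1{(\f{2r}3+j)^2}\eq\sum_{j=2\alpha+1-a_0}^{2\alpha}\f1{j^2}\pmod{p^2}.$$
Next, $\sum_{1\le j\le\alpha,\,p\nmid j}1/j^2-\sum_{j=1}^{a_0}1/j^2=\sum_{a_0+1\le j\le a_0+a_1p,\,p\nmid j}1/j^2$; shifting the index up by $a_1p$ and expanding $1/(j-a_1p)^2\eq 1/j^2+2a_1p/j^3\pmod{p^2}$, the cross term $2a_1p\sum 1/j^3$ vanishes modulo $p^2$ (each block of $p$ consecutive integers contributes $\sum_{k=1}^{p-1}1/k^3\eq0\pmod p$), whence
$$\sum_{1\le j\le\alpha\atop p\nmid j}\f1{j^2}-\sum_{j=1}^{a_0}\f1{j^2}\eq\sum_{\alpha+1\le j\le 2\alpha-a_0\atop p\nmid j}\f1{j^2}\pmod{p^2}.$$

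Combining these, and recalling that the coefficient attached to the $\<-\f r3\>_{p^2}$-sum is now $3$ rather than $2$, the two adjacent ranges $[\alpha+1,2\alpha-a_0]$ and $[2\alpha+1-a_0,2\alpha]$ merge into $[\alpha+1,2\alpha]$, so that the left-hand side of Lemma \ref{lemma43} reduces, modulo $p^2$, to
$$\sum_{1\le j\le\alpha\atop p\nmid j}\f1{j^2}+\sum_{1\le j\le 2\alpha\atop p\nmid j}\f1{j^2}.$$
This is where the argument must diverge from Lemma \ref{lemma33}: there a second $p$-adic Gamma contribution (the $\<\f r{10}-\f12\>_{p^2}$-sum) carried out the final bookkeeping, whereas here the surplus $\sum_{1\le j\le\alpha}1/j^2$ has to be absorbed by a reflection.

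To this end I would assume without loss of generality that $p\eq1\pmod 3$ (the class $p\eq2\pmod3$ being treated in the same way), so that $\alpha=\f{p^2-r}3$ and the key identity $3\alpha=p^2-r$, equivalently $p^2-2\alpha=\alpha+r$, holds. Applying the reflection $j\mapsto p^2-j$, under which $1/j^2\eq 1/(p^2-j)^2\pmod{p^2}$, together with Slavutsky's congruence $\sum_{1\le j\le p^2,\,p\nmid j}1/j^2\eq0\pmod{p^2}$ (\cite[Corollary 1(a)]{Slavutsky}), turns the second sum above into
$$\sum_{1\le j\le 2\alpha\atop p\nmid j}\f1{j^2}\eq\sum_{\alpha+r\le j\le p^2-1\atop p\nmid j}\f1{j^2}\eq-\sum_{1\le j\le\alpha+r-1\atop p\nmid j}\f1{j^2}\pmod{p^2}.$$
Adding back the surplus $\sum_{1\le j\le\alpha,\,p\nmid j}1/j^2$ then telescopes the whole expression to the short range $\sum_{\alpha+r\le j\le\alpha,\,p\nmid j}1/j^2$; since $\f r3\eq-\alpha\pmod{p^2}$, the substitution $j\mapsto \alpha-j$ identifies this with $\sum_{j=0}^{-r}1/(\f r3+j)^2$, which is the desired right-hand side. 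The hypothesis $p\gs 3-2r$ ensures that no denominator in this final range is divisible by $p$, so the restriction $p\nmid j$ may be dropped there.

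I expect the reflection step to be the main obstacle. Unlike the modulus-$5$ case there is no second Gamma term to mop up the extra sum, and the collapse of the coefficient-$3$ contribution hinges on spotting the arithmetic identity $3\alpha+r=p^2$ (for $p\eq1\pmod3$) and orienting the reflection so that precisely the required short range survives; getting the endpoints and the $p\nmid j$ bookkeeping right is the delicate part. The remaining ingredients---the two elementary reductions, the vanishing of the $1/j^3$ cross term, and the coprimality checks---are routine and run parallel to Lemma \ref{lemma33}.
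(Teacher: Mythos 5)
Your proposal is correct and is essentially the proof the paper intends: the paper omits the argument for Lemma \ref{lemma43}, declaring it similar to Lemma \ref{lemma33}, and your two elementary reductions (the substitution $j\mapsto 2\alpha-j$ and the shift by $a_1p$ with the vanishing $1/j^3$ cross term) together with the merging of the ranges $[\alpha+1,2\alpha-a_0]$ and $[2\alpha+1-a_0,2\alpha]$ reproduce that template exactly. Your one point of genuine adaptation---closing with the reflection $j\mapsto p^2-j$ and Slavutsky's congruence (already invoked in the paper's proof of Lemma \ref{g}) via the identity $p^2-2\alpha=\alpha+r$ for $p\equiv1\pmod 3$---is precisely the right replacement for the fourth sum that performed the final bookkeeping in the modulus-$5$ case, and your endpoint and $p\nmid j$ verifications (using $p\geq 3-2r$) are accurate.
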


In order to prove Theorem \ref{theorem1}, we still need the next lemma.
\begin{lemma}\label{lemma44}
Let $r\leq 1$ be an integer coprime with 3. Then
\begin{equation}\label{equ3r}
\sum_{k=0}^{1-r}\f{(r-1)_k(\f r3)_k^3}{(1)_k(\f{2r}3)_k^3}\l(\sum_{j=0}^{k-1}\f{1}{(\f{r}3+j)^2}+\sum_{j=0}^{k-1}\f{1}{(\f{2r}3+j)^2}\r)=\sum_{k=0}^{1-r}\f{(r-1)_k(\f r3)_k^3}{(1)_k(\f{2r}3)_k^3}\sum_{j=0}^{-r}\f{1}{(\f{r}3+j)^2}.
\end{equation}
\end{lemma}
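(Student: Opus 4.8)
The plan is to prove \eqref{equ3r} by exploiting a reflection symmetry of the summand under $k\mapsto N-k$, where $N:=1-r$ is the terminating index. Write
$$w_k:=\f{(r-1)_k(\f r3)_k^3}{(1)_k(\f{2r}3)_k^3},\quad A_k:=\sum_{j=0}^{k-1}\f1{(\f r3+j)^2},\quad B_k:=\sum_{j=0}^{k-1}\f1{(\f{2r}3+j)^2},\quad C:=\sum_{j=0}^{-r}\f1{(\f r3+j)^2},$$
so that \eqref{equ3r} is exactly $\sum_{k=0}^{N}w_k(A_k+B_k)=C\sum_{k=0}^{N}w_k$. First I would establish the weight symmetry $w_{N-k}=w_k$ for $0\ls k\ls N$. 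Using the reversal identity $(a)_{N-k}=(-1)^k(a)_N/(1-a-N)_k$ together with the evaluations $1-(r-1)-N=1$, $1-\f r3-N=\f{2r}3$, $1-\f{2r}3-N=\f r3$ and $1-1-N=r-1$, each Pochhammer ratio collapses:
$$\f{(r-1)_{N-k}}{(r-1)_N}=\f{(-1)^k}{(1)_k},\quad \f{(\f r3)_{N-k}}{(\f r3)_N}=\f{(-1)^k}{(\f{2r}3)_k},\quad \f{(1)_{N-k}}{(1)_N}=\f{(-1)^k}{(r-1)_k},\quad \f{(\f{2r}3)_{N-k}}{(\f{2r}3)_N}=\f{(-1)^k}{(\f r3)_k}.$$
Cubing the second and fourth and forming the quotient $w_{N-k}/w_N$, all powers of $-1$ cancel and one is left with $w_{N-k}/w_N=w_k$; a short computation using $(\f r3)_N=(-1)^N(\f{2r}3)_N$ and $(r-1)_N=(-1)^N(1)_N$ shows $w_N=1$, hence $w_{N-k}=w_k$.

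Next I would record how the inner sums behave under the same reflection. Since $N-1=-r$, one has $A_N=B_N=C$, the equality $B_N=C$ following from the substitution $j\mapsto -r-j$, which also gives the identity $\sum_{j=0}^{-r}\f1{(\f{2r}3+j)^2}=C$ used implicitly on the right of \eqref{equ3r}. Replacing $j$ by $N-1-j$ in a tail sum and using $\f r3+(N-1-j)=-(\f{2r}3+j)$ yields $\sum_{j=N-k}^{N-1}\f1{(\f r3+j)^2}=B_k$ and, symmetrically, $\sum_{j=N-k}^{N-1}\f1{(\f{2r}3+j)^2}=A_k$. Consequently
$$A_{N-k}=A_N-\sum_{j=N-k}^{N-1}\f1{(\f r3+j)^2}=C-B_k,\qquad B_{N-k}=C-A_k,$$
so that $A_{N-k}+B_{N-k}=2C-(A_k+B_k)$.

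Finally I would combine the two symmetries. Reindexing the left-hand side of \eqref{equ3r} by $k\mapsto N-k$ and inserting $w_{N-k}=w_k$ together with the previous relation gives
\begin{align*}
\sum_{k=0}^{N}w_k(A_k+B_k)&=\sum_{k=0}^{N}w_{N-k}(A_{N-k}+B_{N-k})=\sum_{k=0}^{N}w_k\l(2C-(A_k+B_k)\r)\\
&=2C\sum_{k=0}^{N}w_k-\sum_{k=0}^{N}w_k(A_k+B_k).
\end{align*}
Solving for the sum gives $\sum_{k=0}^{N}w_k(A_k+B_k)=C\sum_{k=0}^{N}w_k$, which is precisely \eqref{equ3r}.

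The main obstacle is the verification that the summand is genuinely self-symmetric, i.e.\ $w_{N-k}=w_k$, which requires careful bookkeeping of the signs and parameter shifts in the reversal identity. This self-symmetry is exactly the feature that separates \eqref{equ3r} from its analogue Lemma \ref{lem44}: the underlying ${}_4F_3$ in \eqref{equ3r} is at once balanced and well-poised, hence self-dual under Whipple's transformation \eqref{4f3}, and reflection maps the weight to itself, whereas in Lemma \ref{lem44} reflection sends the weight to its Whipple dual, forcing one to first apply \eqref{4f3}. For this reason I would \emph{not} mimic the proof of Lemma \ref{lem44} by inserting a parameter $x$ into \eqref{4f3} and comparing coefficients of $x^2$: here that transformation degenerates to the identity and yields no information, so the direct reflection argument above is the efficient route.
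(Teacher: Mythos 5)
Your proof is correct and is essentially the paper's own argument: both rest on the reflection $k\mapsto 1-r-k$, the reversal facts $(r-1)_{1-r}=(-1)^{1-r}(1)_{1-r}$ and $(\f r3)_{1-r}=(-1)^{1-r}(\f{2r}3)_{1-r}$ (equivalently your $w_{N-k}=w_k$ with $w_N=1$), and the inner-sum reflection $\f r3+(-r-j)=-(\f{2r}3+j)$. The only cosmetic difference is that the paper reflects just the $\f{2r}3$-sum and then telescopes $\sum_{j=0}^{k-1}\f1{(\f r3+j)^2}+\sum_{j=k}^{-r}\f1{(\f r3+j)^2}$ into the full sum directly, whereas you reflect both inner sums and solve $S=2C\Sigma-S$; your side remark that the Whipple deformation used for Lemma \ref{lem44} degenerates to a trivial identity here is also accurate.
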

\begin{proof}
Clearly,
$$
(r-1)_{1-r}=(-1)^{1-r}(1)_{1-r}\quad \t{and}\quad \l(\f r3\r)_{1-r}=(-1)^{1-r}\l(\f{2r}3\r)_{1-r}.
$$
Then we have
\begin{align*}
\sum_{k=0}^{1-r}\f{(r-1)_k(\f r3)_k^3}{(1)_k(\f{2r}3)_k^3}\sum_{j=0}^{k-1}\f{1}{(\f{2r}3+j)^2}=\ &\sum_{k=0}^{1-r}\f{(r-1)_{1-r-k}(\f r3)_{1-r-k}^3}{(1)_{1-r-k}(\f{2r}3)_{1-r-k}^3}\sum_{j=0}^{-r-k}\f{1}{(\f{2r}3+j)^2}\\
=\ &\f{(r-1)_{1-r}(\f r3)_{1-r}^3}{(1)_{1-r}(\f{2r}3)_{1-r}^3}\sum_{k=0}^{1-r}\f{(r-1)_k(\f r3)_k^3}{(1)_k(\f{2r}3)_k^3}\sum_{j=k}^{-r}\f{1}{(\f{2r}3-r-j)^2}\\
=\ &\sum_{k=0}^{1-r}\f{(r-1)_k(\f r3)_k^3}{(1)_k(\f{2r}3)_k^3}\sum_{j=k}^{-r}\f{1}{(\f{r}3+j)^2}.
\end{align*}
Therefore
\begin{align*}
&\sum_{k=0}^{1-r}\f{(r-1)_k(\f r3)_k^3}{(1)_k(\f{2r}3)_k^3}\l(\sum_{j=0}^{k-1}\f{1}{(\f{r}3+j)^2}+\sum_{j=0}^{k-1}\f{1}{(\f{2r}3+j)^2}\r)\\
=\ &\sum_{k=0}^{1-r}\f{(r-1)_k(\f r3)_k^3}{(1)_k(\f{2r}3)_k^3}\l(\sum_{j=0}^{k-1}\f{1}{(\f{r}3+j)^2}+\sum_{j=k}^{-r}\f{1}{(\f{r}3+j)^2}\r)\\
=\ &\sum_{k=0}^{1-r}\f{(r-1)_k(\f r3)_k^3}{(1)_k(\f{2r}3)_k^3}\sum_{j=0}^{-r}\f{1}{(\f{r}3+j)^2}.
\end{align*}
\end{proof}

\medskip

\noindent{\it Proof of Theorem \ref{theorem2}}. Setting $m=1-r$, $t=\frac{r}{3}$, $n=\frac{p-r}{3}$, $a=0$, $b=-\frac{p}{3}$, $c=\frac{ip}{3}$ in \eqref{new}, we have
\begin{align}\label{new2}
&{}_7F_6\bigg[\begin{matrix}\frac{r}{3},&1+\frac{r}{6},&\frac{r-p}{3},&\frac{r}{3},&\frac{r+p}{3},&\frac{r-ip}{3},&\frac{r+ip}{3}\\ &\f r6,&1+\f p3,&1,&1-\f p3,&1+\f{ip}{3},&1-\f{ip}{3}\end{matrix}\bigg|\ 1\bigg]\notag\\
=\ &{}_4F_3\bigg[\begin{matrix}r-1,&\f{r-p}{3},&\f{r+p}{3},&\f{r}{3}\\ &\f{2r+ip}{3},&\f{2r}{3},&\f{2r-ip}{3}\end{matrix}\bigg|\ 1\bigg]\notag\\
&\times\f{(1+\f{r}{3})_n(\f{2r+ip}{3})_n(1-\f{r+p}{3})_n(1-\f{r+p-ip}{3})_n}{(1)_n(1-\frac{p}{3})_n(1+\f{ip}{3})_n(\f{r-p+ip}{3})_n}.
\end{align}
In view of Lemmas \ref{lemw} and \ref{lemma42},
\begin{align}\label{h1}
&{}_7F_6\bigg[\begin{matrix}\frac{r}{3},&1+\frac{r}{6},&\frac{r-p}{3},&\frac{r}{3},&\frac{r+p}{3},&\frac{r-ip}{3},&\frac{r+ip}{3}\\ &\f r6,&1+\f p3,&1,&1-\f p3,&1+\f{ip}{3},&1-\f{ip}{3}\end{matrix}\bigg|\ 1\bigg]\notag\\
=\ &{}_7F_6\bigg[\begin{matrix}\frac{r}{3},&1+\frac{r}{6},&\frac{r-p}{3},&\frac{r}{3},&\frac{r+p}{3},&\frac{r-ip}{3},&\frac{r+ip}{3}\\ &\f r6,&1+\f p3,&1,&1-\f p3,&1+\f{ip}{3},&1-\f{ip}{3}\end{matrix}\bigg|\ 1\bigg]_{\f{p-r}{3}}\notag\\
\eq\ &\f1r\sum_{k=0}^{(p-r)/3}(6k+r)\f{(\f r3)_k^6}{(1)_k^6}\l(1+\f{1}{81}p^4\sum_{j=0}^{k-1}\f{1}{(r/3+j)^4}-\f{1}{81}p^4\sum_{j=0}^{k-1}\f1{(1+j)^4}\r)\notag\\
\eq\ &\f1r\sum_{k=0}^{(p-r)/3}(6k+r)\f{(\f r3)_k^6}{(1)_k^6}\notag\\
\eq\ &\f1r\sum_{k=0}^{p-1}(6k+r)\f{(\f r3)_k^6}{(1)_k^6}\pmod{p^5},
\end{align}
where the last step follows from the fact $(\f{r}{3})_k\equiv0\pmod p$ for $k\in\{(p-r)/3+1,\ldots,p-1\}$.

Now we consider the right-hand side of \eqref{new2} modulo $p^5$. Clearly, for any $\alpha$, $t\in\mathbb{Z}_p$,
\begin{align*}
(\alpha+tp)_k(\alpha-tp)_k=\ &(\alpha^2-t^2p^2)((\alpha+1)^2-t^2p^2)\cdots((\alpha+k-1)^2-t^2p^2)\\
\equiv\ &(\alpha)_k^2\left(1-t^2p^2\sum_{j=0}^{k-1}\f{1}{(\alpha+j)^2}\right)\pmod{p^4}.
\end{align*}
Then, by \eqref{equ3r}, we get
\begin{align}\label{e1}
&{}_4F_3\bigg[\begin{matrix}r-1,&\f{r-p}{3},&\f{r+p}{3},&\f{r}{3}\\ &\f{2r+ip}{3},&\f{2r}{3},&\f{2r-ip}{3}\end{matrix}\bigg|\ 1\bigg]\notag\\
=\ &{}_4F_3\bigg[\begin{matrix}r-1,&\f{r-p}{3},&\f{r+p}{3},&\f{r}{3}\\ &\f{2r+ip}{3},&\f{2r}{3},&\f{2r-ip}{3}\end{matrix}\bigg|\ 1\bigg]_{1-r}\notag\\
\equiv\ &\sum_{k=0}^{1-r}\f{(r-1)_k(\f{r}3)_k^3}{(1)_k(\f{2r}{3})_k^3}\l(1-\f{p^2}{9}\sum_{j=0}^{k-1}\f{1}{(\f{r}{3}+j)^2}-\f{p^2}{9}\sum_{j=0}^{k-1}\f{1}{(\f{2r}{3}+j)^2}\r)\notag\\
=\ &\sum_{k=0}^{1-r}\f{(r-1)_k(\f{r}3)_k^3}{(1)_k(\f{2r}{3})_k^3}\l(1-\f{p^2}{9}\sum_{j=0}^{-r}\f{1}{(\f{r}{3}+j)^2}\r)
\pmod{p^4}.
\end{align}
Note that
\begin{align*}
\l(1+\f{r}{3}\r)_n=\ &(-1)^n\f{p}{r}\l(1-\f{p}{3}\r)_n,\\
\l(1-\f{r+p}{3}\r)_n=\ &(-1)^n\l(\f{2r}{3}\r)_n,\\
\l(1-\f{r+p-ip}{3}\r)_n=\ &(-1)^n\l(\f{2r-ip}{3}\r)_n,\\
\l(\f{r-p+ip}{3}\r)_n=\ &(-1)^n\l(1-\f{ip}{3}\r)_n.\\
\end{align*}
Hence, we have
\begin{align}\label{e2}
&\f{(1+\f{r}{3})_n(\f{2r+ip}{3})_n(1-\f{r+p}{3})_n(1-\f{r+p-ip}{3})_n}{(1)_n(1-\frac{p}{3})_n(1+\f{ip}{3})_n(\f{r-p+ip}{3})_n}=\frac{p}{r}\cdot\frac{(\f{2r}{3})_n(\f{2r+ip}{3})_n(\f{2r-ip}{3})_n}{(1)_n(1+\f{ip}{3})_n(1-\f{ip}{3})_n}\notag\\
\equiv\ &\frac{p}{r}\cdot\frac{(\f{2r}{3})_n^3}{(1)_n^3}\l(1+\f{p^2}{9}\sum_{j=0}^{(p-r)/3-1}\f{1}{(\f{2r}{3}+j)^2}-\f{p^2}{9}\sum_{j=0}^{(p-r)/3-1}\f{1}{(1+j)^2}\r)\pmod{p^5}.
\end{align}
According to Lemmas \ref{robert} and \ref{g}, we obtain
\begin{align}\label{e3}
\frac{(\f{2r}{3})_n^3}{(1)_n^3}=\ & \frac{\Gamma_p(\f{r+p}{3})^3\Gamma_p(1)^3}{\Gamma_p(\f{2r}{3})^3\Gamma_p(1+\f{p-r}{3})^3}=(-1)^{r+1}\frac{\Gamma_p(\f{r+p}{3})^3\Gamma_p(\f{r-p}{3})^3}{\Gamma_p(\f{2r}{3})^3}\notag\\
\equiv\ &(-1)^{r+1}\f{\Gamma_p(\f r3)^6}{\Gamma_p(\f{2r}3)^3}\l(1+\f{p^2}{3}\sum\limits_{1\leq j\leq\<-\f{r}{3}\>_{p^2}\atop p\nmid j}\f{1}{j^2}\r)\pmod{p^4}.
\end{align}
Substituting \eqref{e1}--\eqref{e3} into the right-hand side of \eqref{new2} and applying Lemma \ref{lemma43}, we arrive at
\begin{align}\label{h2}
&{}_4F_3\bigg[\begin{matrix}r-1,&\f{r-p}{3},&\f{r+p}{3},&\f{r}{3}\\ &\f{2r+ip}{3},&\f{2r}{3},&\f{2r-ip}{3}\end{matrix}\bigg|\ 1\bigg]\times\f{(1+\f{r}{3})_n(\f{2r+ip}{3})_n(1-\f{r+p}{3})_n(1-\f{r+p-ip}{3})_n}{(1)_n(1-\frac{p}{3})_n(1+\f{ip}{3})_n(\f{r-p+ip}{3})_n}\notag\\
\equiv\ &(-1)^{r+1}\f{p}{r}\cdot\f{\Gamma_p(\frac{r}{3})^6}{\Gamma_p(\f{2r}3)^3}\sum_{k=0}^{1-r}\f{(r-1)_k(\f{r}3)_k^3}{(1)_k(\f{2r}{3})_k^3}\bigg{(}1-\f{p^2}{9}\sum_{j=0}^{-r}\f{1}{(\f{r}{3}+j)^2}\notag\\
&+\f{p^2}{9}\sum_{j=0}^{(p-r)/3-1}\f{1}{(\f{2r}{3}+j)^2}-\f{p^2}{9}\sum_{j=0}^{(p-r)/3-1}\f{1}{(1+j)^2}+\f{p^2}{3}\sum\limits_{1\leq j\leq\<-\f{r}{3}\>_{p^2}\atop p\nmid j}\f{1}{j^2}\bigg{)}\notag\\
\equiv\ &(-1)^{r+1}\f{p}{r}\cdot\f{\Gamma_p(\frac{r}{3})^6}{\Gamma_p(\f{2r}3)^3}\sum_{k=0}^{1-r}\f{(r-1)_k(\f{r}3)_k^3}{(1)_k(\f{2r}{3})_k^3}\pmod{p^5}.
\end{align}
Combining \eqref{new2}, \eqref{h1} and \eqref{h2}, we finally complete the proof of Theorem \ref{theorem2}.\qed

\begin{Acks}
The first author is supported by the National Natural Science Foundation of China (grant 11971222). The second author is supported by the National Natural Science Foundation of China (grant 12201301).
\end{Acks}

\end{document}